\theoremstyle{plain}
\newtheorem{theorem}{Theorem}
\newtheorem{lemma}[theorem]{Lemma}
\newtheorem{prop}[theorem]{Proposition}
\newtheorem{cor}[theorem]{Corollary}
\newcommand{\xar}[1]{\ensuremath{\xrightarrow{#1}}}
\newcommand{\xlar}[1]{\ensuremath{\xleftarrow{#1}}}
\newcommand{\mc}[1]{\mathcal{#1}}
\newcommand{\mr}[1]{\mathrm{#1}}
\newcommand{\ms}[1]{\mathscr{#1}}
\newcommand{\Q}{\mathbb{Q}}
\newcommand{\Z}{\mathbb{Z}}
\newcommand{\on}[1]{\operatorname{#1}}
\newcommand{\la}{\langle}
\newcommand{\ra}{\rangle}
\newcommand{\op}{\mathrm{op}}
\newcommand{\p}[1]{\subparagraph{#1}}
\titleformat{\subparagraph}[runin]{\normalsize\bfseries}{\textbf{(\arabic{subparagraph}})}{0.5em}{}[]
\titleformat{\section}[block]{\normalsize\bfseries}{\textbf{\arabic{section}.}}{0.5em}{}[]
\title{A note on a local combinatorial formula for the Euler class of a PL spherical fiber bundle}
\author{Nikolai Mn\"ev\thanks{PDMI RAS;  Chebyshev Laboratory, SPbSU. Research is supported by the Russian Science Foundation grant 19-71-30002.} \\  \href{mailto:mnev@pdmi.ras.ru}{mnev@pdmi.ras.ru} }
\begin{document}
\maketitle
\begin{abstract} We present a local combinatorial formula for the Euler class of a $n$-dimen\-si\-onal PL spherical fiber bundle as a rational number  $e_{\it CH}$ associated to a chain of $n+1$ abstract subdivisions of  abstract  $n$-spherical PL cell complexes. The number $e_{\it CH}$ is a combinatorial (or matrix) Hodge theory  twisting cochain in Guy Hirsch's  homology model of the bundle associated with PL combinatorics of the bundle. \end{abstract}

\tableofcontents

\section{Introduction.}
\p{The problem on finding a local combinatorial formula for the Euler class of a spherical fiber bundle  in PL category.} \label{slf}
PL category is defined using triangulations. A spherical PL fiber bundle $S^n\xar{}E \xar{p} B$  has a triangulation given by a map of simplicial complexes $\pmb E \xar{\pmb p} \pmb B$. We call the stalk of the triangulation over a base simplex $\sigma^k \in \pmb B(k)$  an elementary simplicial spherical bundle. It triangulates the trivial bundle $S^n \times \Delta^k \xar{\pi} \Delta^k $ in such a way that any simplex in the total space is mapped onto a face of the base simplex (see Figure \ref{eb}).
\begin{figure}[h!]
	\begin{center}
		\includegraphics[width=3.0in]{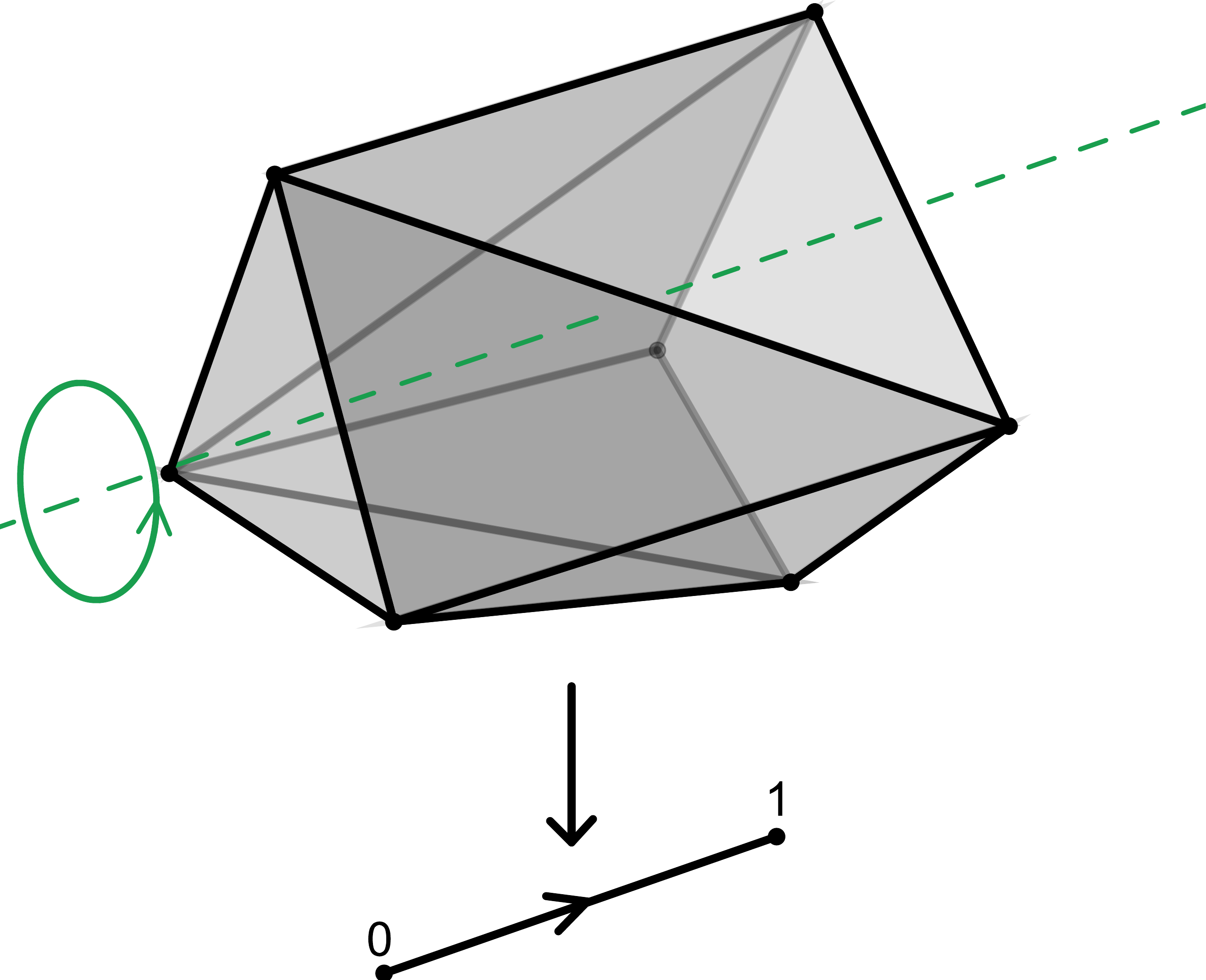} 
		\caption{ Elementary simplicial circle bundle \label{eb}}
	\end{center}
\end{figure} The  triangulation  $\pmb E \xar{\pmb p} \pmb B$  is assembled from elementary simplicial spherical bundles using boundary maps - combinatorial  automorphisms of the elementary bundles.
Suppose that the base simplicial complex is locally ordered. Therefore it has a complex of  ordered cochains, which  compute singular the  cohomology of the base.     We wish to find a universal  rational function of the combinatorial isomorphism class of elementary triangulated oriented $S^n$-bundles over an $(n+1)$-simplex  such that this value being assigned to the base $(n+1)$-simplex is a rational cocycle representing the Euler class of the bundle. Thus it is supposed to be independent of boundary combinatorial automorphisms composing a bundle from elementary bundles. Such a formula we call \textit{a simplicial local combinatorial formula for the Euler class}.

Since the Euler class in an integer characteristic class, the rational formula should  have integer periods, now in the combinatorial setting. I.e. its evaluation on integer $(n+1)$-simplicial cycles in the base are integer numbers depending on the isomorphism class of the bundle and homology class of the cycle, and independent of triangulation. Particularly, if we a triangulate differentiable  $S^n$ bundle on a differentiable closed oriented $(n+1)$-base, we should obtain the same Euler number of the bundle out of combinatorial and out of differential considerations.    Therefore the arithmetic  of the  formula is highly non-trivial.

\p{} There is an alternative to triangulations and somewhat dual combinatorics of PL fiber bundles investigated in \cite{Mnev:2007}.  

By a \textit{local system} $\mathcal G$ on $\pmb B$ with values in some category $\pmb G$ we mean a map associating to any vertex $v$ of $\pmb B$ an object $G(v)$ of $\pmb G$ and  to any oriented  edge $(v_0,v_1)$ a $\pmb G$-morphism  $G(v_0) \xar{G(v_0,v_1)} G(v_1)$, in a way that  any 2-simplex $(v_0,v_1,v_2)$  goes to composition $G(v_0,v_2)=G(v_1,v_2)G(v_0,v_1)$. Alternatively we may say that we have a simplicial map $\pmb B \xar{\mathcal G} \mathscr N \pmb G$, where $\mathscr N$ denotes nerve of category. 

One may encode a spherical PL fiber bundle $p$  by a local system on the base simplicial complex  $\pmb B$ with values in the category $\pmb S^n$ of abstract regular spherical PL cell complexes and corresponding  aggregations (this explained in Sec. \ref{pl}). Any triangulation has such a local system canonically associated. For this combinatorics a local formula for the Euler class of a PL $S^n$-bundle is a universal Euler  $(n+1)$-cocycle which is a function a of chain of $n+1$ subdivisions (or aggregations) of $n$-spherical cell complexes, e.g. a chain of subdivisions of convex polytopes. This function measures certain combinatorial asymmetry in the chain. We call it \textit{the aggregation local combinatorial formula for Euler class} and it is defined in Sec. \ref{pl} \S \ref{aggreu}.          
        
\p{}This note is devoted to a simple observation: the combinatorial
model of spherical  PL fiber bundle as a local system of abstract aggregations (or subdivisions) of PL spherical cell complexes on a triangulated base (\cite{Mnev:2007}) smoothly and naturally fits into the very classical theory of Hirsch homology models of fibrations, since it produces exactly the ``bigraded model of a fibration" and its Serre spectral sequence is the bicomplex spectral sequence. Thus, we are immediately producing a local combinatorial formula for Euler class as a ``twisting cochain". In fact, we substitute the combinatorics of cellular local systems from \cite{Mnev:2007} into the deformation theory of local systems of spherical chain complexes as expressed in \cite[Corollary 2.5]{Igusa2011} with explicit  formula \cite[(3)]{Igusa2011} for the Euler cocycle. The simplicial local combinatorial formula in this setting is derived from the formula for local systems. The resulting aggregation  local formula (Theorem \ref{finformula}) is composed from combinatorial  Hodge-theoretic retractions of chain cellular spheres in homology. This is expected  to be an interesting subject of combinatorics, statistics, thermodynamics etc of ``Higher Kirchhoff theorems" \cite{Lyons2009,CCK2012,DKM2015,CCK2015,CCK17}.
 
\bigskip
\noindent
Unless stated otherwise we assume that our coefficients are in some characteristic 0 field $A$, since our goal is a rational formula. 
\section{Euler class of an oriented spherical fiber bundle.}
\p{Gysin homomorphism and trangression differential.}
An oriented spherical fiber bundle
\begin{equation}\label{bund}
	S^n \xar{} E\xar{p} B
\end{equation} has integer Euler characteristic class
$\mathscr E(p) \in H^{n+1}(B;\Z)$.
In    \cite{Steenrod1949, CS1950,Thom1952} the Euler class of fiber bundle (\ref{bund}) was identified via Gysin homomorphism $G= \ms \frown \ms E(p)$ of homological Gysin exact sequence of the bundle
\begin{multline} \label{gysin}
	\cdots \xar{p} H_{n+k+1} (B;\Z) \xar{\frown \ms E(p)} H_k(B;\Z) \xar{j} H_{n+k}(E;\Z)\xar{p}H_{n+k}(B;\Z)\xar{\frown \ms E(p)} \cdots \\
	\cdots \xar{\frown \ms E(p)} H_1(B;\Z)\xar{j}H_{n+1}(E;\Z)\xar{p} H_{n+1}(B;\Z) \xar{\frown \ms E(p)} H_0(B;\Z) \xar{}0	
\end{multline}	
Gysin homomorphism is the differential on $(n+1)$-st page of Leray-Serre spectral sequence of the bundle
\begin{equation*}\label{}
	H_{n+k+1}(B; \underset{\approx \Z}{H_0(S^n;\Z)})\approx E^{n+1}_{n+k+1,0}(p) \xar{d_{n+1} = \frown\ms E(p) }  E^{n+1}_{k,n}(p)\approx H_k(B, \underset{\approx \Z}{H_n (S^n;\Z)})
\end{equation*}
And thus the first one - the transgression differential
\begin{equation}\label{trans}
	H_{n+1}(B; \underset{\approx \Z}{H_0(S^n;\Z)})\approx E^{n+1}_{n+1,0}(p) \xar{\frown\ms E(p) }  E^{n+1}_{0,n}(p)\approx H_0(B, \underset{\approx \Z}{H_n (S^n;\Z)})
\end{equation}
-- can be considered as  Euler class itself if the base is connected.
\section{Guy Hirsch model of a fibration.}  Guy Hirsch in \cite{Hirsch1954} introduced ``Hirsch homology models of fibrations."  For certain  fibrations $F\xar{} E \xar{g} B$ he detected a subcomplex of the chain complex $C_\bullet (E) $ of the form $C_\bullet (B)\otimes H_\bullet (F)$ which has the same homology as  $E$. In \cite{Brown1959} E. Brown  defined the ``twisted tensor product" and recognized Hirsch model in the form of twisted Eilenberg-Zilber theorem, $C_\bullet (E) \approx C_\bullet (B)\otimes_{\tau(g)} H_\bullet(F)$, where $\tau(g)$ is ``twisting cochain."

Twisting cochains for Hirsch models allow chain-level understanding of the differentials in the Serre spectral sequence of a Serre fibration. This was investigated in detail by Georgian school \cite{Berika1976, Berikashvili2006,Kadei1986}.  Modern
setup for Brown's  twisting cochain has the form of $A_\infty$ local systems \cite[Sec. 1,2]{Igusa2011}.

 There are two steps in the construction of Hirsch model  of a fibration $F \xar{} E \xar{g} B$: an algebraic step  and a topological step.

 In the case of a base polyhedron $B$ (\cite{Berika1976,Kadei1986,Igusa2011} ), the  algebraic step
 investigates a local system $\mathcal L$ of chain complexes with fixed homology $H_\bullet (F) = \sum_{k=0}^{n} H_k (F)$ on a locally ordered triangulation of the base $\pmb B, |\pmb B| = B$.  Let    $\on{Tot} \mathcal L$ be the naturally filtered total complex of the bicomplex $C_\bullet(\pmb B; \mc L_\bullet)$ of  $\mc L$.  The Hirsch-Brown model  of $\mc L$ is the deformation of the differential in the complex   $C_\bullet (\pmb B) \otimes H_\bullet(F) \rightsquigarrow C_\bullet (\pmb B) \otimes_{\tau(\mathcal L)} H_\bullet(F) $ using a  \textit{twisting cochain} $\tau(\mathcal L)$ so that  $\on{T ot} \mathcal L$ is filtred homotopy equivalent to $C_\bullet (\pmb B) \otimes_{\tau(\mathcal L)}  H_\bullet(F)$ producing an equivalent spectral sequence with differentials readable on the chain level.

 In the next topological step we try to replace the fibration $g$ over the polyhedron $|\pmb B|$ by a local system of\textit{ spaces} 
 $\mc W$ over $\pmb B$ in such a way that certain chain complexes $C_\bullet (\mc W_v)$ of the local system will give rise to the ``bigraded model of the fibration" - a bicomplex with total complex equivalent to the total singular complex of the fibration  (\cite{Dress1967,Fadell1958,Berikashvili2006}).  For this brigaded model the algebraic step became applicable and we obtain $ C_\bullet(\pmb B) \otimes_{\tau(\mathcal W(g))}  H_\bullet (F)$ as the Hirsch model of $g$ and thus we can see chain-level formulas for  differentials in the Serre spectral sequence of $g$ up to irrationalities in the construction of the bigraded model.
\section{Local systems of spherical chain complexes on a simplicial base, twisting cochain and formal Euler cocycle.}  We present in a suitable for us form the classic construction (\cite{Berika1976,Kadei1986,Igusa2011}) of the twisting cochain for  a local system of spherical chain complexes on a simplicial base.  Our coefficiens are in a characteristic $0$ field $A$.
 \p{Local systems of spherical chain complexes.} \label{locsys} Let $\on{Ch}(S^n)$ be the category of oriented spherical chain complexes.
Objects are length $n+1$  chain complexes of finite-dimensional vector spaces over $A$, where we denote the differentials by $\gamma$
$$K_\bullet =(0\xar{}K_n \xar{\gamma_n}K_{n-1} \xar{\gamma_{n-1}}\cdots \xar{\gamma_1} K_0 \xar{} 0) $$ We suppose that that $H_i(K) = A$ if $i=0,n$ and $H_i(K)=0$ otherwise. Also we suppose that $K$ has fixed augmentation denoted by $K_0 \xar{p_0} A$ and fixed ``orientation"  $i_n$ making the sequence exact:
$$0\xar{}A \xar{i_n} K_n\xar{\gamma_n} K_{n-1}$$ Orientation fixes ``fundamental class" $i(1)\in K_n$ considered as generator  in $H_n(K)$.  Morphisms in $\on{Ch} (S^n)$ are degree $0$ chain maps commuting with augmentation and orientation. We have a special homology spherical complex with zero differentials
\begin{equation}\label{key}
	 H_\bullet(S^n) = (0 \xar{} \underset{0}{A} \xar{} 0 \xar{} \cdots \xar{}0 \xar{} \underset{n}{A} \xar{}0)
\end{equation}

Let $\pmb B$ be a locally ordered simplicial complex equipped with  a  $\on{Ch}(S^n)^\op$-valued local system $\mc L$ on $\pmb B$  defied by simplicial map $ \pmb B \xar{\mc L} \ms N \on{Ch(S^n;A)^\op}$.
Associating  to a simplex $\sigma_p = (v_0,...,v_p) \in \pmb B(p)$
the complex $L(v_p)$ sitting over its  { \em last vertex} and to i-th face inclusion $d_i \sigma_p \xar{} \sigma_p $ the identity map of complexes if $i=0,..., p-1$ and the map $L(v_{p-1},v_p)$ if $i=p$, we obtain from the simplicial local system a simplicial constructible sheaf on $\pmb B$.
\p{Leray-Gysin spectral sequence of a spherical local system.}
We can consider the complex $C_\bullet (\pmb B ; \mc L_\bullet)$ of simplicial chains on $\pmb B$ with coefficients in $\mc L$. It is a bigraded module with two anticommuting differentials. The  module $C_p(\pmb B, \mc L_q)$ is formed by $(p,q)$ chains $\sigma_p c_q$ assigning  to  a simplex $\sigma_p =(v_0,...,v_p)\in \pmb B(p)$ an element $c_{q}\in L_q(v_p)$, i.e a $q$-element of the complex over  the last  vertex $v_p$. We have two anticommuting differentials: the simplicial horizontal differential
\begin{equation}\label{key}
	\begin{array}{c} C_p(\pmb B, \mc L_q)\xar\partial C_{p-1}(\pmb B, \mc L_q) \\
		\partial (\sigma_p c_{q}) = \sum_{i=0}^{p-1} (-1)^i (d_i \sigma_p) c_q + (-1)^p (d_p \sigma_p) L(v_{p-1},v_{p})(c_q)
	\end{array}
\end{equation}
where the non-trivial transition map appears only in the last summand
and the vertical differential
\begin{equation}\label{key}
	\begin{array}{c}
		C_p(\pmb B, \mc L_q)\xar{\tilde \gamma} C_{p}(\pmb B, \mc L_{q-1}) \\
		\tilde \gamma (\sigma_p c_q) = (-1)^p\sigma_p \gamma_{\sigma_p} (c_q)
\end{array}  \end{equation}	
induced by the differential in $Ch(S^n)$. Thus we obtained the total complex $\on{Tot}(\pmb B, \mc L )$ with
total differential  \begin{equation}\label{key}
	\on{Tot}= \partial + \tilde \gamma
\end{equation}
and horizontal filtration $$F_p (\on{Tot} C_\bullet  (\pmb B; \mc L_\bullet))_d=\oplus_{{\underset{k_1 \leq p}{k_1 + k_2}}  = d} C_{k_1}(\pmb B; \mc L_{k_2}) $$
Its first quadrant spectral sequence $E^\bullet_{\bullet, \bullet}$
staring from  page zero converges to the chain homology of the $\on{Tot}$-complex.  On  page zero we have: $$\begin{tikzcd}
	{E^0_{p,q}=C_p(\pmb B; \mathcal L_q)} \arrow[d, "d_0 = \tilde \gamma", shift right=10] \\
	{E^0_{p,q}=C_p(\pmb B; \mathcal L_{q-1})}
\end{tikzcd}$$
On $E^1_{\bullet \bullet}$ the 1-differential is horizontal differential $\partial$
$$C_{p-1} (\pmb B;  H_q(S^n)) = E^1_{p-1,q} \xlar{d_1 = \partial} E^1_{p,q}= C_p (\pmb B;  H_q (S^n))$$
I.e. $E_{p, q}^1 \approx C_p (\pmb B;  H_q (S^n))$. Thus $E^2_{p,q} = H_p(\pmb B;  H_q (S^n))$. On page $n+1$ we get Gysin-Leray transgression differential (\ref{trans}) as the \textit{formal} Gysin homomorphism. ``Formal," because  a priori the local system $\mc L$ it is not related to any spherical Serre fibration on $|\pmb B|$.

\p{Brown's twisting cochain as a formal Euler cocycle.} We need some objects and notions. $C_\bullet(\pmb B)$ has Alexander-Whitney  coalgebra structure. Let $H=H_\bullet= \sum_i H_i$ some graded module. Then $\on{Hom} (H,H)$ has an algebra structure by composition and we can consider the DGA  $C^\bullet (\pmb B; \on{Hom}(H,H))$ with Alexander-Whitney product.
A \textit{twisting cochain} $\tau$ is a cochain \begin{equation}\label{tc}
	\tau = \tau^1 + \tau^2 + \dots, \tau^i \in C^i(\pmb B; \on {Hom}^{i-1} (H,H) )
\end{equation}
satisfying the condition
\begin{equation}\label{tc1}
	\delta \tau = -\tau \smile \tau
\end{equation} There is a pairing $ H \otimes \on{Hom}(H,H) \xar{}  H$ sending $e_q \in  H_q$ and $f_i \in \on{Hom}_i(H,H)$ to element $e_q  f_i = f_i (e_q) \in  H_{q+i}$. We have the cap product $C_k (\pmb B;  H) \otimes C^p (\pmb B;  \on{Hom}( H,  H) \xar{\frown} C_{p-k}(\pmb B,  H)  $.
Having a twisting cochain $\tau$ we can deform the differential as \begin{equation}\label{tcdiff}
	\partial \rightsquigarrow \partial \; + \; \frown \tau
\end{equation} on $C_\bullet (\pmb B)\otimes H$ thus obtaining a new filtered DG module $C_\bullet (\pmb B)\otimes_\tau H$. The new differential respects the horizontal filtration on $C_\bullet (\pmb B) \otimes  H$ therefore it just adds some nontrivial differentials to the trivial bicomplex   spectral sequence in the simplest possible way.

Now let $H=H(S^n)$. Let the twisting cochain has $\tau^1 =0$. In this situation  a twisting cochain $\tau $ on $C_\bullet(\pmb B)\otimes H(S^n)$ is just \textit{formal} \textit{Euler cocycle}. Indeed, by dimension reason equation (\ref{tc1}) became the cocycle condition $\delta \tau =0$, thus $\tau$ has single  non-zero element $\tau^{n+1} \in C^{n+1}(\pmb B; \on{Hom}^n(H(S^n),H(S^n))$.   The spectral sequence of horizontal filtration becomes the Gysin-Leray sequence with transgression differential $\; \frown \tau$ on page $n+1$. So $\tau(\sigma_{n+1})(1)$ is nothing but an $A$ - valued ``Euler" simplicial  $n+1$-cocycle on the base.
\p{Strong deformational retractions, Basic Homology Perturbation Lemma.}
 Recall the notions of strong deformational retraction and homology perturbation.
Let $C, K$ be chain complexes of modules over a commutative ring with unit. Strong deformation  retraction (``SDR") (see \cite{Gugenheim1989}) of $ C$ on $K$ is the data $\la {F,i,p} \ra$ of the diagram of chain
maps:
\begin{equation}\label{sdr}
	\begin{tikzcd}
		C \arrow[r, "p", shift left] \arrow["F"', loop, distance=2em, in=215, out=145] & K \arrow[l, "i", shift left]
	\end{tikzcd}
\end{equation}
Here the retraction operator $C \xar{\mr F} C[1]$ shifts the dimension by one and the following conditions holds:
\begin{equation}\label{retr}
	pi=Id; dF+Fd=Id - ip
\end{equation}
\begin{equation}\label{nil}
	Fi=0;pF=0;F^2=0
\end{equation}
Here the annihilation  conditions (\ref{nil}) can be satisfied if (\ref{retr}) holds by perturbing $\mr F$. In particular $i$ splits the exact sequence $$ 0\xar{} \on{Ker} p \xar{} C \xar{p} K \xar{} 0$$ with  projection on the kernel given by $dF+Fd$ and thus represents $C$ as a direct sum of $K$ and a contractible $\on{Ker} p$.

Let  $H(C)$ be the homology complex of $C$ considered as a complex  with zero differentials. The complex $C$ is called {\em homology split} if there exist a {\em homology splitting} (see for ex. \cite[\S 1]{Gugenheim1982}), which is an SDR of $C$ on $H(C)$:
$$
\begin{tikzcd}
	C \arrow[r, "p", shift left] \arrow["F"', loop, distance=2em, in=215, out=145] & H(C) \arrow[l, "i", shift left]
\end{tikzcd} $$
representing $C$ as a direct sum of its homology module and a trivial chain complex. If the complex and its homology are free then a homology splitting exist. In particular if $C$ has trivial
homology then its homology splitting is  a degree 1 contraction $C\xar{F} C[1]$ such that $F^2 =0$ and $dF+Fd = \on {Id}$.

If complexes $C,K$ are filtered and SDR data preserves filtrations, then SDR is {\em filtered}.

A perturbation of SDR data is a degree $-1$
homomorphism $C \xar{\psi} C[-1] $ such that $(d_C + \psi)^2 =0$, i.e. $d_C + \psi$ also a differential. 
The fundamental tool to obtain otherwise unavailable formulas is following
\begin{lemma}[Basic Perturbation Lemma \cite{Gugenheim1989} ] \label{bpl}
	Let $\la {F,i,p} \ra$ be a filtered SDR (\ref{sdr}) and $\psi$ its perturbation.
	Then
	$$
	\begin{tikzcd}
		C_{d_C +\psi} \arrow[r, "p_\psi", shift left] \arrow["F_\psi"', loop, distance=2em, in=215, out=145] & K_{d_K + d_\psi} \arrow[l, "i_\psi", shift left]
	\end{tikzcd} $$
	is a filtered SDR
	where
	\begin{align}
		d_\psi & = p\psi\Sigma^\psi i \label{diff} \\
		p_\psi & = p(1- \psi \Sigma^\psi_i F )\\
		i_\psi & =  \Sigma^\psi i \\
		F_\psi & =  \Sigma^\psi F 	
	\end{align}	
	Here
	$$\Sigma^\psi = \sum_{j \geq 0} (-1)^i (F\psi)^j = 1 - F\psi + F\psi F\psi - F\psi F\psi F\psi \cdots  $$
	
\end{lemma}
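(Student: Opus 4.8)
The plan is to check directly that the quadruple $(F_\psi,i_\psi,p_\psi,d_\psi)$ built from the displayed formulas satisfies all the defining relations (\ref{retr})--(\ref{nil}) of an SDR together with the chain-map property of $i_\psi$ and $p_\psi$ for the differentials $d_C+\psi$ and $d_K+d_\psi$, every assertion being an identity of operators on the finite-dimensional bigraded module. First one must make sense of $\Sigma^\psi$: the filtration hypothesis forces $F\psi$ to be locally nilpotent (here, by finite dimensionality, genuinely nilpotent), so $\Sigma^\psi=\sum_{j\ge 0}(-1)^j(F\psi)^j$ is a well-defined filtration-preserving operator, and from the geometric-series shape one reads off at once the resolvent identities
$$\Sigma^\psi(\mathrm{Id}+F\psi)=(\mathrm{Id}+F\psi)\Sigma^\psi=\mathrm{Id},\qquad \Sigma^\psi=\mathrm{Id}-F\psi\,\Sigma^\psi=\mathrm{Id}-\Sigma^\psi F\psi.$$
Combining these with $F^2=0$ and with $pF=0$, $Fi=0$ from (\ref{nil}) gives the two cheap but decisive consequences $F\Sigma^\psi=F$ and $p\Sigma^\psi=p$.

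The remaining ingredients are, from the unperturbed SDR, the relations $pi=\mathrm{Id}$, $d_CF+Fd_C=\mathrm{Id}-ip$, $Fi=pF=F^2=0$ and the chain-map relations $pd_C=d_Kp$, $id_K=d_Ci$; and, because $\psi$ is a perturbation, the square-zero identity $d_C\psi+\psi d_C+\psi^2=0$. With the two consequences above the side conditions are one-liners: $F_\psi i_\psi=\Sigma^\psi F\Sigma^\psi i=\Sigma^\psi Fi=0$ and $F_\psi^2=\Sigma^\psi F\Sigma^\psi F=\Sigma^\psi F^2=0$, both using $F\Sigma^\psi=F$; while expanding the displayed formula for $p_\psi$ the extra summand collapses via $F\Sigma^\psi F=F^2=0$ and $F\Sigma^\psi i=Fi=0$, leaving $p_\psi F_\psi=p\Sigma^\psi F=pF=0$ and $p_\psi i_\psi=p\Sigma^\psi i=pi=\mathrm{Id}$.

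The substantive point is the contraction identity $(d_C+\psi)F_\psi+F_\psi(d_C+\psi)=\mathrm{Id}-i_\psi p_\psi$. I would prove it by writing $F_\psi=\Sigma^\psi F$, using the recursion $\Sigma^\psi=\mathrm{Id}-F\psi\Sigma^\psi$ to rewrite $F_\psi=F-F\psi F_\psi$, commuting $d_C$ past the factors of $F$ and $\psi$ by means of $d_CF+Fd_C=\mathrm{Id}-ip$ and $d_C\psi+\psi d_C=-\psi^2$, and resumming the resulting telescoping series with the resolvent identity; the only genuinely fiddly part of the whole proof is tracking signs and identifying exactly which surviving terms reassemble into $i_\psi p_\psi=\Sigma^\psi i\,p(\mathrm{Id}-\psi\Sigma^\psi F)$. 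The remaining assertions --- that $i_\psi$, $p_\psi$ intertwine $d_C+\psi$ with $d_K+d_\psi$, and hence that $d_K+d_\psi$ is again a differential (equivalently $(d_K+d_\psi)^2=0$, which once the intertwining is known also follows formally from $p_\psi i_\psi=\mathrm{Id}$ and $(d_C+\psi)^2=0$) --- come out of the same resummation, and filtration-preservation of all four pieces of data is immediate since each is assembled from the filtration-preserving $F,i,p$ and the filtration-decreasing $\psi$. So I expect the contraction identity to be the sole real obstacle; everything else reduces to a single line. Conceptually, (\ref{sdr}) exhibits $C$ as $i(K)\oplus\ker p$ with $\ker p$ a contractible subcomplex contracted by $F$, and the lemma amounts to Gauss-eliminating a filtration-lowering perturbation of the differential on that contractible summand, the series $\Sigma^\psi$ encoding the elimination; but turning this picture into the explicit formulas is no shorter than the direct check. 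This is, of course, the classical Basic Perturbation Lemma of \cite{Brown1959} and \cite{Gugenheim1989}, and one may equally well simply cite it --- the sketch above records the self-contained route.
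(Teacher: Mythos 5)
Your verification strategy is sound, but note that the paper does not actually prove this lemma at all: it is quoted as a classical result with a pointer to \cite{Gugenheim1989}, so your direct check is, strictly speaking, a different (and more self-contained) route than the paper's, which is pure citation. Your sketch follows the standard proof: the resolvent identities for $\Sigma^\psi=(\mathrm{Id}+F\psi)^{-1}$, the consequences $F\Sigma^\psi=F$ and $p\Sigma^\psi=p$ from the side conditions (\ref{nil}), the one-line verifications of $p_\psi i_\psi=\mathrm{Id}$ and of the annihilation conditions, and then the genuinely computational homotopy identity $(d_C+\psi)F_\psi+F_\psi(d_C+\psi)=\mathrm{Id}-i_\psi p_\psi$ together with the intertwining of $i_\psi,p_\psi$, which you correctly identify as the only nontrivial step and outline by the usual recursion $F_\psi=F-F\psi F_\psi$ plus $d_C\psi+\psi d_C+\psi^2=0$; all of this is consistent with the paper's sign conventions (and you silently correct the paper's typos $(-1)^i$ and $\Sigma^\psi_i$). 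The one point to sharpen: convergence of $\Sigma^\psi$ is not a consequence of finite dimensionality or of filtered-ness alone (an invertible map on a finite-dimensional space is not nilpotent); what makes $(F\psi)^j$ locally nilpotent is that in the intended application $\psi=\partial$ strictly lowers the horizontal filtration while $F$ preserves it and the filtration is bounded below --- a hypothesis the paper's statement leaves implicit and which your write-up should state explicitly rather than attribute to dimension counting. With that caveat, what your approach buys is an explicit, checkable proof (useful since the formulas (\ref{diff})--(\ref{F}) are used verbatim in Proposition \ref{Igu}), at the cost of a page of bookkeeping that the paper avoids by citing the literature.
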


\p{Hirsch model of a $\on{Ch}(S^n)$-local system with retractions on homology, its twisting cochain and a formal Euler cocycle.} Here we present a deformation of $\on{Ch}(S^n)^\op$ local system $\mc L$ onto the trivial local system $C_\bullet(\pmb B; H(S^n)) $ by a locally defined formal Euler cocycle. The deformation and the cocycle determined by a certain free extra structure - retractions onto homology of complexes  $L_v, v \in \pmb B(0)$.  Suppose that for
any spherical complex $L_v$ in the local system  we have fixed fixed
strong deformational retraction on homology $H(S^n)$. This will be the extra data $\mc F$:
\begin{equation}\label{ret}
	\begin{tikzcd}
	L_v \arrow[r, "p_v", shift left] \arrow["F_v"', loop, distance=2em, in=215, out=145] & H(S^n) \arrow[l, "i_v", shift left], v \in \pmb B (0)
\end{tikzcd} 
\end{equation}

\begin{prop}[Corollary 2.5 and formula (3) in \cite{Igusa2011}] \label{Igu}
There is a filtering preserving  strong deformational retraction
of $\on{Tot}(\pmb B; \mathcal L) $ onto $C_\bullet (\pmb B) \otimes_{\mathscr E (\mc F)} H(S^n)$ where \begin{multline} \label{formal} \mathscr E (\mc F)(v_0,...,v_{n+1}) = \\ p_n L_{0,1}F_1L_{1,2}....F_nL_{n,n+1}i_0 \\ \in Z^{n+1}(\pmb B; \on{Hom}^n (H(S^n),H(S^n))) \end{multline}
is the Euler cocycle for the local system of spherical chain complexes  $\mc L$ endowed with retractions on homology $\mc F$.
\end{prop}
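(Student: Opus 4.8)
The plan is to realize $\on{Tot}(\pmb B;\mc L)$ as a perturbation of the ``vertical'' bicomplex and to read off the twisting cochain from the Basic Perturbation Lemma (Lemma~\ref{bpl}). The first step is to glue the fixed fibrewise retractions $\mc F=\{\la F_v,i_v,p_v\ra\}_{v\in\pmb B(0)}$ into a single SDR of the column complex. Over a simplex $\sigma_p=(v_0,\dots,v_p)$ the coefficients of $C_\bullet(\pmb B;\mc L_\bullet)$ lie in the complex $L_\bullet(v_p)$ over the last vertex, so I set $\mathbf i(\sigma_p e)=\sigma_p\,i_{v_p}(e)$, $\mathbf p(\sigma_p c)=\sigma_p\,p_{v_p}(c)$ and $\mathbf F(\sigma_p c)=(-1)^p\sigma_p\,F_{v_p}(c)$, the sign chosen to match the convention $\tilde\gamma(\sigma_p c)=(-1)^p\sigma_p\gamma_{\sigma_p}(c)$. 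Since the SDR identities (\ref{retr})--(\ref{nil}) hold vertex by vertex, $\la\mathbf F,\mathbf i,\mathbf p\ra$ is an SDR of $(C_\bullet(\pmb B;\mc L_\bullet),\tilde\gamma)$ onto $(C_\bullet(\pmb B;H(S^n)),0)$; and all three maps preserve the horizontal degree, so this SDR is filtered for the horizontal filtration.

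Next I would perturb by the horizontal differential. The total differential is $\on{Tot}=\tilde\gamma+\partial$, and since $\tilde\gamma$ and $\partial$ anticommute and each squares to zero, $\psi:=\partial$ is a genuine perturbation of $\tilde\gamma$, which moreover strictly lowers the horizontal filtration. The operator $\Sigma^\psi=\sum_{j\ge0}(-1)^j(\mathbf F\partial)^j$ is then locally finite: $\mathbf F$ fixes the horizontal degree while $\partial$ drops it by one, so on a chain of horizontal degree $p$ the series terminates at $j=p$. Lemma~\ref{bpl} therefore applies and yields a filtered SDR of $\on{Tot}(\pmb B;\mc L)$ onto $(C_\bullet(\pmb B;H(S^n)),d_\psi)$ with $d_\psi=\mathbf p\,\partial\,\Sigma^\psi\,\mathbf i=\sum_{j\ge0}(-1)^j\,\mathbf p\,\partial(\mathbf F\partial)^j\,\mathbf i$.

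It then remains to identify $d_\psi$ with $\partial_{\pmb B}\otimes\on{Id}+\frown\mathscr{E}(\mc F)$. Here the annihilation relations (\ref{nil}) carry the whole computation: after an application of $\mathbf F$ (or of the final $\mathbf p$, or before the initial $\mathbf i$) passing to a non-last face of the current simplex leaves its last vertex $v$ unchanged, and the next operator then contains $F_vF_v=0$ (resp.\ $p_vF_v=0$, resp.\ $F_vi_v=0$); so at every stage only the last face survives, inserting one transition map $L(v_{k-1},v_k)$. Counting degrees, the $j$-th monomial maps $C_p(\pmb B;H_q)$ to $C_{p-j-1}(\pmb B;H_{q+j})$, i.e.\ it is a cochain in $C^{j+1}(\pmb B;\on{Hom}^j(H(S^n),H(S^n)))$. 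For $j=0$ the surviving monomial is the untwisted horizontal differential $\partial_{\pmb B}\otimes\on{Id}$, since a morphism of $\on{Ch}(S^n)$ preserves augmentation and orientation and hence acts as the identity on $H_0(S^n)=A=H_n(S^n)$; in particular the degree-$1$ part of the twisting cochain vanishes. As $H(S^n)$ is concentrated in degrees $0$ and $n$, one has $\on{Hom}^j(H(S^n),H(S^n))=0$ for $0<j<n$ and for $j>n$, so the only surviving higher monomial is $j=n$; writing it out along the forced last-face zig-zag gives exactly the composite $p_{v_0}L(v_0,v_1)F_{v_1}L(v_1,v_2)\cdots F_{v_n}L(v_n,v_{n+1})i_{v_{n+1}}$ of (\ref{formal}) (after matching the orientation and $\op$ conventions fixed in \S\ref{locsys}), and the associated term of $d_\psi$ is precisely $\frown\mathscr{E}(\mc F)$. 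Hence $(C_\bullet(\pmb B;H(S^n)),d_\psi)=C_\bullet(\pmb B)\otimes_{\mathscr{E}(\mc F)}H(S^n)$. Finally $\mathscr{E}(\mc F)$ is automatically a cocycle: $d_\psi^2=0$ by Lemma~\ref{bpl}, $(\partial_{\pmb B}\otimes\on{Id})^2=0$, and $\mathscr{E}(\mc F)\smile\mathscr{E}(\mc F)$ lands in $\on{Hom}^{2n}(H(S^n),H(S^n))=0$, so the mixed term of $d_\psi^2$ forces $\delta\mathscr{E}(\mc F)=0$, i.e.\ $\mathscr{E}(\mc F)\in Z^{n+1}(\pmb B;\on{Hom}^n(H(S^n),H(S^n)))$.

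Everything structural here is routine once the vertical SDR of the first step is in place; the delicate point is the combinatorial and sign bookkeeping of the last paragraph --- verifying that (\ref{nil}) really annihilates every non-last-face monomial, that the surviving $j=n$ monomial carries exactly the indexing and sign of (\ref{formal}) once the contravariance of $\mc L$ and the chosen fundamental classes are tracked, and that the $j=0$ monomial is literally $\partial_{\pmb B}\otimes\on{Id}$, so that one genuinely lands on the twisted tensor product with $\tau^1=0$.
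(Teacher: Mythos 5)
Your proposal is correct, and its core is the same as the paper's: you glue the fibrewise SDRs $\mc F$ into a filtered vertical SDR of $\on{Tot}^{\tilde\gamma}(\pmb B;\mc L)$ onto $C_\bullet(\pmb B)\otimes H(S^n)$, perturb by $\partial$, and apply the Basic Perturbation Lemma, with the annihilation conditions (\ref{nil}) forcing the last-face zig-zag and degree reasons leaving only the $j=0$ and $j=n$ monomials; this is exactly part (1) of the paper's proof, and you even make explicit two points the paper glosses over (local finiteness of $\Sigma^\psi$ because $\partial$ strictly lowers the horizontal filtration while $\mathbf F$ preserves it, and the compatibility of the SDR data with the fixed augmentation and orientation, which is what makes the $j=0$ monomial the untwisted $\partial_{\pmb B}\otimes\on{Id}$ and hence $\tau^1=0$). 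Where you genuinely diverge is the cocycle property: the paper verifies $\delta\,\mathscr E(\mc F)=0$ by a separate hand computation in the first-vertex cochain complex $C^\bullet(\pmb B;\on{Hom}_\bullet(H;L_v))$, inductively showing that $\delta(F\delta)^k i$ is a $\tilde\gamma$-cycle, so that $\delta(F\delta)^n i$ is proportional to the fundamental class and $p_n\delta(F\delta)^n i_0$ is a $\delta$-coboundary-hence-cocycle; you instead deduce it formally from the BPL output: $d_\psi^2=0$, the cross terms give $\frown\delta\mathscr E(\mc F)$, and $\mathscr E(\mc F)\smile\mathscr E(\mc F)$ dies in $\on{Hom}^{2n}(H(S^n),H(S^n))=0$ (valid for $n\ge 1$), which is precisely the degeneration of the twisting equation $\delta\tau=-\tau\smile\tau$ to $\delta\tau=0$ that the paper already records when $\tau^1=0$. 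Your route is shorter and needs only the injectivity of $\tau\mapsto(\frown\tau)$, which holds here since $\on{Hom}^n(H,H)$ is one-dimensional; the paper's explicit induction buys the concrete description of the intermediate cochains $(F\delta)^k i$ and the identification of $\delta(F\delta)^n i$ with a multiple of the fundamental class, which is also how it arrives at the closed formula (\ref{formal}). As you note yourself, what remains in your write-up is only sign and orientation bookkeeping, which is exactly the level of detail at which the paper's own computation operates.
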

\begin{proof} \mbox{} \\
\indent
	(1)	 This is a typical application of the Basic Homology Perturbation Lemma \ref{bpl}. Let is forget about the horizontal differential $\partial$ in $\on{Tot}(\pmb B;\mc L)$ and obtain the complex $\on{Tot}^{\tilde{\gamma}} (\pmb B; \mc L)$. The family of retractions $\mc F$ (\ref{ret}) provide filtred SDR of  $\on{Tot}^{\tilde{\gamma}} (\pmb B, \mc L)$ on filtred module $ C^0_\bullet (\pmb B)\otimes H(S^n)$ with zero differentials. Now we perturb the differential $\tilde{\gamma} \rightsquigarrow \tilde{\gamma}+\partial $ restoring initial differential in $\on{Tot} (\pmb B; \mc L)$ and see what happens by the lemma identities.
Compute the new differential $d_\partial = p\partial \Sigma^\partial i $ on the element $\sigma^k h \in C^0_k(\pmb B)\otimes H(S^n)$. Directly applying annihilation conditions (\ref{nil}) we get an expression:
$$d_\partial (\sigma^kh) = \partial \sigma^kh + \sigma^k \frown \mathscr E (\mc F)(v_0,...,v_{n+1})(h)$$     	
	
(2)  Check that $\mathscr E (\mc F)(v_0,...,v_{n+1})$ is a cocycle.
Consider the bigraded module of simplicial cochains $C^\bullet (\pmb B; \mc L_\bullet)$ with values in the local system.
Put $c^p_q(\sigma_p) \in \mc L_q (v_0 \sigma_p)$ -- we assign an element over the { \em first} vertex. It has anticommuting  horizontal codifferential
\begin{equation}\label{codiff}
	\delta (x (\sigma_{p})) = L(v_0,v_1)(x(d_0 \sigma_p)) + \sum_i^p (-1)^ix (d_i \sigma_p)  \in L (v_0 (\sigma_p) )
\end{equation}
 and vertical differential $\tilde \gamma$.
Introduce a new  local system $\on {Hom}_\bullet (H; L_v)$ on $\pmb B$. Then we have a system of cochains
\begin{equation}\label{tt}
	\begin{array}{rl}
		i \in & C^0 (\pmb B; \on{Hom}_0(H; L_v) )\\
		\delta i \in & C^1 (\pmb B; \on{Hom_0}(H; L_v))\\
		F\delta i \in & C^1 (\pmb B; \on{Hom_1}(H; L_v))  \\
		\delta F \delta i \in &  C^2 (\pmb B; \on{Hom}_2(H;L_v)) \\
		\cdots & \cdots \\
		(F\delta)^n i \in & C^n (\pmb B; \on{Hom}_n(H ; L_v)) \\
		\delta (F\delta)^n i \in  & C^{n+1} (\pmb B; \on{Hom}_n(H ; L_v))
	\end{array}
\end{equation}
Differentials  $\tilde \gamma $ and $\delta$ anticommute.
We can prove
inductively  by $k$ that the cochain $\delta (F\delta)^k i$ is a cycle for $\tilde \gamma$.
The inductive step is
$$\tilde \gamma \delta (F\delta)^k i = - \delta \tilde \gamma F \underbrace{\delta (F\delta)^{k-1}i}_{\tilde \gamma - \text{ cycle }}= -\delta \delta (F\delta)^{k-1} =0$$
since by (\ref{retr}) if $\tilde \gamma x =0$ then $\tilde \gamma F x = x$.
Therefore, $\delta (F\delta)^n i$ is a $\tilde \gamma$-cycle and therefore it is  proportional to the fundamental class $i_n (1)$. Simultaneously it is a $\delta$-cocycle being a $\delta$-coboundary. Therefore $p_n \delta (F\delta)^n i_0 \in Z^{n+1}(\pmb B; \on{Hom}(H;H))$ is a $\delta$-cocycle.

Now we compute (\ref{tt}) using (\ref{codiff}).
$$\delta i (v_0,v_1) = L(v_0,v_1)i(v_1) - i(v_0) \in L_{v_0}(0) $$
$$ F \delta i = F_{v_0} L(v_0, v_1) i (v_1)  - \underbrace{F_{v_0} i (v_0)}_{=0 \text{ since } Fi=0}$$
\begin{multline*}
\delta F \delta i (v_0,v_1,v_2) = \\ L(v_0,v_1) F(v_1) L(v_1,v_2)i(v_2) - F(0) (L(0,1)i(1) +  L(0,2) i(2)) \end{multline*}
$$(F\delta)^k i (0,1,...,k) = F(0)L(0,1)F(1)... L(k,k-1) i(k) +\underbrace{F^2(0)(...)}_{=0} $$
\begin{multline*}
 \delta (F\delta)^k i (0,...,k+1) = \\ L(0,1) F(1)L(1,2)F(2)... L(k+1,k) i(k+1) +  F(0)\sum_{i=1}^{k+1} (-1)^i (...)
\end{multline*}
Finnaly since $p(0)F(0)=0$ we get that
\begin{multline*}
p(F\delta)^n i (0,..., n+1) = \\ p(0)L(0,1)F(1)L(1,2)... F(n)L(n,n+1) i(n+1)	= \\ \mathscr E (\mc F)(v_0,...,v_{n+1})
 \end{multline*} and therefore $ \mathscr E (\mc F)(v_0,...,v_{n+1})$ is a cocycle and an Euler twisting cochain for the  Hirsch model of the spherical local system $\mc L$.
\end{proof}

\section{PL combinatorics of a spherical fiber bundle} \label{pl}
\p{Geometric and abstract regular spherical PL cell complexes.} 
Let $S^n$ be the $n$- dimensional sphere in PL category. A \textit{regular geometric cell complex} structure $B$ on $S^n$ is a covering of $S^n$ by collection of  closed embedded PL balls $B$ such that interiors of the balls form a partition of $S^n$ and the boundary of a ball is a union of  balls. The face complex of a convex polytope  is the most obvious example.  The other names for these objects in the literature are ``ball complexes" (\cite[Appendix to Ch 2 (5) ]{RS1972}) or ``regular CW complexes" (\cite[Ch III, sect. 1,2]{LW}).
Partially ordered by inclusion, the set $P(B)$ of balls in $B$ defines $B$ up to PL homeomorphism (see \cite{Bj84}). Thus we can define an \textit{abstract regular spherical  PL cell complex} as a finite poset $P$ for which the simplicial order  complex $\Delta P$ is PL-homeomorphic to $S^n$ and all the principal lower ideals are PL homeomorphic to balls.

\bigskip
\noindent
\textit{Unless specified otherwise, all our cell complexes are regular and PL.}

\bigskip
\noindent
If we have a poset $P$ then $P^{\on{op}}$ is a poset with the order inverted. It is a special feature  of PL category that if a poset P is an abstract  spherical  cell complex then $P^{\on{op}}$ is also  an abstract  spherical   cell complex which we call \textit{dual}  to P. This follows from ``PL invariance of a star" theorem. For simplicial triangulations of manifolds it is often called \textit{ Poincaré dual complex}, which shows up in combinatorial proofs of Poincaré duality.
\p{Geometric and abstract subdivisions, cellular local systems.} \label{gas} We follow \cite{Mnev:2007}.
A geometric spherical  cell complex $B_0$ is a\textit{ subdivision} of $B_1$ (or $B_1$ is aggregation of $B_0$ ) if the relative interior of any ball from $B_0$ is contained in the relative interior of a ball from $B_1$. We denote it $B_0 \unlhd B_1$. A geometric aggregation creates poset map $P(B_0) \xar{} P(B_1)$.  A poset map of  abstract spherical  cell complexes is called an \textit{abstract aggregation} if up to PL homeomorphism it can be represented by a geometric aggregation. With the direction of arrows reversed we call such a morphism an \textit{abstract subdivision}.  Thus we get a category $\pmb S^n$ of abstract regular $n$-spherical PL cell complexes and abstract aggregations. We suppose that abstract spherical cell complexes are \textit{oriented}, i.e. for any complex $S$ the fundamental class $[S]$ is chosen and an aggregation map aggregates fundamental the class to the fundamental class.  

An \textit{ abstract spherical cellular local system} $\mathcal S$ on $\pmb B$ is a local system with values in $\pmb S^n$. With $\mathcal S$ we can associate a PL cellular  spherical fiber bundle $\on {{\mathscr T}ot} \mathcal S \xar{} \pmb B$ using iterated cellular cylinders of subdivision maps (this cellular bundle is called ``prismatic" in \cite{Mnev:2007}). This goes as follows. We can realize any chain of abstract aggregations over a simplex  as a chain of geometric aggregations of the geometric realization of the first complex in the chain. Then we can construct  the geometric cellular cylinders of corresponding subdivisions over the simplex (see Fig. \ref{aggr}).  These geometric prismatic trivial bundles constructed separately over each base simplex  can be glued together fiberwise using for PL transition maps the parametric Alexander trick. The result is the PL spherical cellular fiber bundle $\on{\ms T ot} \mc S$.
\begin{figure}[h!]
	\begin{center}
\label{key}
\includegraphics[width=5.0in]{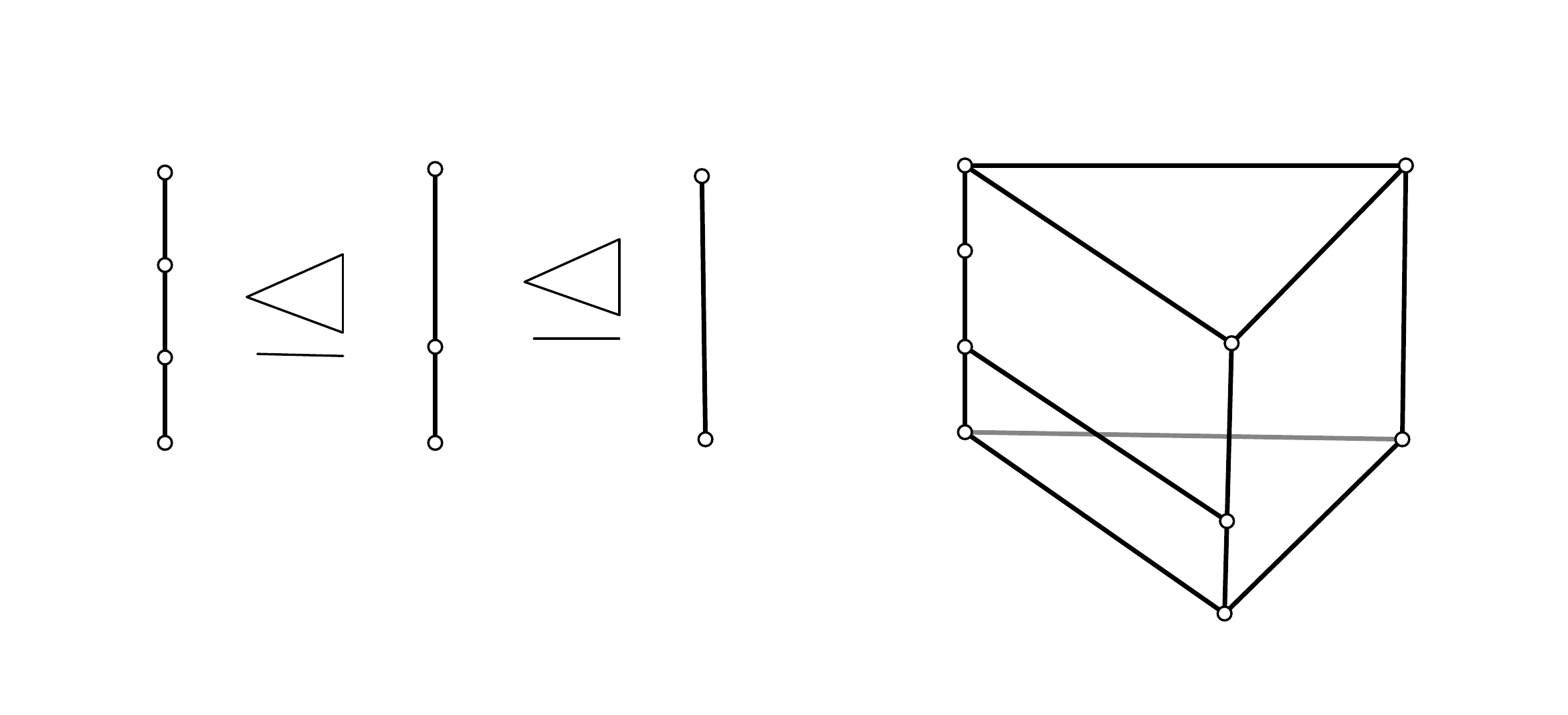}
\caption{\label{aggr}}
\end{center}
\end{figure}

\p{Bundle triangulations versus cellular local systems.}
PL category is defined using triangulations. A spherical PL fiber bundle $S^n\xar{}E \xar{p} B$  has a triangulation by a map of simplicial comlexes $\pmb E \xar{\pmb p} \pmb B$. We call the stalk of triangulation over a base simplex $\sigma^k \in \pmb B(k)$  an elementary simplicial spherical bundle. It triangulates the trivial bundle $S^n \times \Delta^k \xar{\pi} \Delta^k $ in such a way that any simplex in total space is mapped onto a face of the base simplex.  A triangulation  $\pmb E \xar{\pmb p} \pmb B$  is assembled from elementary simplicial spherical bundles using boundary maps - combinatorial  automorphisms of the elementary bundles.
Let us fix the following statement:
 \begin{prop} \label{tri-aggr}
 	A triangulation of a spherical PL fiber bundle $\pmb E\xar{\pmb p} \pmb B$ has a canonically associated spherical cellular local system $\mc S(\pmb p)$ on the first derived subdivision $\on{Sd} \pmb B$ of $\pmb B$  in such a way that the bundle $\on{\mathscr Tot} \mc S(\pmb p)$  is PL isomorphic to $\pmb p$.
 \end{prop}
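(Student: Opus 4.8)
The plan is to build $\mc S(\pmb p)$ locally over each simplex of $\pmb B$, reading it off from the elementary stalks of $\pmb p$, and then to recover the PL isomorphism $\on{\mathscr Tot}\mc S(\pmb p)\cong\pmb p$ by literally reversing the prismatic construction of \S\ref{gas}. Since $\pmb E\xar{\pmb p}\pmb B$ is glued from its elementary stalks along boundary automorphisms, it is enough to treat one elementary simplicial spherical bundle $E_\sigma\xar{}\sigma$, $\sigma=\Delta^k$, compatibly with passage to faces. Recall $E_\sigma$ triangulates $S^n\times\Delta^k$ with each of its simplices mapped linearly onto a face of $\Delta^k$.

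\textbf{The local input.} The first and essential step is to extract from $E_\sigma$ the chain of abstract $n$-spherical subdivisions that it exhibits along the barycentric subdivision of $\Delta^k$: to every nonempty face $\rho\subseteq\Delta^k$ a regular spherical cell complex $S_\rho(E_\sigma)$ --- the cell structure carried by the fibre over the barycentre $\hat\rho$ that $E_\sigma$ displays transverse to the open stratum $\mathring\rho$ --- and to every inclusion $\rho\subseteq\rho'$ an aggregation $S_{\rho'}(E_\sigma)\to S_\rho(E_\sigma)$ (the fibre structure over the smaller face being the coarser one). These must be coherent, so that over a flag $\rho_0\subseteq\dots\subseteq\rho_p$ the edge aggregations compose correctly, and natural, so that replacing $\Delta^k$ by a face $\tau$ and $E_\sigma$ by the elementary sub-bundle $E_\tau$ returns the same data under the obvious identifications. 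This is exactly the combinatorics of ``prismatic'' bundles from \cite{Mnev:2007}, in the local normal form pictured in Figure \ref{aggr}; I would quote it in that form. Orientations are inherited from the fibre orientation and are preserved by all the aggregations.

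\textbf{Assembling and recovering.} The vertices of $\on{Sd}\pmb B$ are the barycentres $\hat\sigma$, $\sigma\in\pmb B$, and its simplices are the flags of $\pmb B$, ordered by decreasing dimension. I would then put $\mc S(\pmb p)(\hat\sigma):=S_\sigma(E_\sigma)$, and on the edge from $\hat\tau$ to $\hat\sigma$ with $\sigma\subseteq\tau$ the aggregation $S_\tau(E_\tau)\to S_\sigma(E_\tau)=S_\sigma(E_\sigma)$, using that $E_\tau$ restricts over $\sigma$ to $E_\sigma$; the functoriality relation on a flag $\sigma_0\subseteq\sigma_1\subseteq\sigma_2$ involves only the single bundle $E_{\sigma_2}$ and holds by the coherence above. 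This gives a well-defined oriented $\pmb S^n$-valued local system on $\on{Sd}\pmb B$ that involves no choices, hence is canonical. For the isomorphism: over the closed simplex of $\on{Sd}\pmb B$ spanned by a flag $\sigma_0\subseteq\dots\subseteq\sigma_p$ (which sits inside the simplex $\sigma_p$ of $\pmb B$), $\on{\mathscr Tot}\mc S(\pmb p)$ is by definition the iterated geometric cellular cylinder of $S_{\sigma_p}(E_{\sigma_p})\to\dots\to S_{\sigma_0}(E_{\sigma_p})$, while the restriction of $\on{Sd}\pmb p\colon\on{Sd}\pmb E\to\on{Sd}\pmb B$ (which is PL-isomorphic to $\pmb p$) to the same simplex is PL-identified, over that simplex, with the very same iterated cylinder --- this being just the defining property of the complexes $S_\rho$ as the successive transverse fibre structures of $E_{\sigma_p}$. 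Performing this identification over every simplex of $\on{Sd}\pmb B$ yields local fibrewise PL homeomorphisms $\on{\mathscr Tot}\mc S(\pmb p)\to\on{Sd}\pmb E$ that on overlaps differ only by the edge aggregations; as both sides are glued by the parametric Alexander trick over the same transition data, these patch to a global fibrewise PL homeomorphism.

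\textbf{Main obstacle.} The load-bearing part is the local input: the claim that an elementary simplicial spherical bundle over $\Delta^k$ is, barycentric cell by barycentric cell, an iterated geometric cylinder of a canonical chain of subdivisions of an $n$-spherical complex, naturally in $\Delta^k$ --- this is where one actually invokes \cite{Mnev:2007} and must state it in precisely the form needed here. A secondary point requiring care is the final patching: one has to check that the local PL homeomorphisms over adjacent simplices of $\on{Sd}\pmb B$ can be chosen to match the Alexander-trick gluings on both sides simultaneously, which I expect to follow from the usual uniqueness up to fibrewise PL isotopy of the Alexander cone, applied to the two families of prismatic pieces at once.
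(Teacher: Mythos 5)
There is a genuine gap, and it sits exactly where you park it: your ``local input.'' As stated it is not correct. The cell structure that the elementary bundle $E_\sigma \to \Delta^k$ induces on the fibre over a barycentre $\hat\rho$ is a prismatic (multi-simplicial) complex $P(\hat\rho)$ whose cells are products of simplices (products of the fibres over the vertices of $\rho$), and when one passes from $\hat{\rho'}$ to $\hat\rho$ with $\rho\subseteq\rho'$ the prism factors coming from the discarded vertices \emph{shrink to points}: the induced poset map $P(\hat{\rho'})\to P(\hat\rho)$ is a boundary degeneration that collapses cells and lowers their dimension. Such a map is \emph{not} an aggregation in the sense of \S \ref{gas} (for an aggregation the preimage of a $k$-ball is a union of $k$-balls), so your assertion that the fibre structures themselves form a chain with ``the fibre structure over the smaller face being the coarser one'' fails already for the elementary circle bundle over an interval (Figure \ref{dt}). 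The missing idea in the paper's proof is the dualization: one takes the opposite posets $P^{\on{op}}(\hat\rho)$, and the key nontrivial fact --- Cohen's theorem that these degeneration maps are transverse cellular maps, hence exactly the duals of aggregation maps (\cite[Theorem 8.1]{Cohen1967}) --- is what turns $P^{\on{op}}(\hat{\rho'})\to P^{\on{op}}(\hat\rho)$ into an aggregation of abstract spherical cell complexes. This dualization (the reason Figure \ref{dt} shows the \emph{dual} pattern of circle subdivisions) is absent from your argument; the phrase ``the cell structure the fibre displays transverse to the open stratum'' gestures at it but is never made precise or justified.

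Moreover, you propose to discharge this load-bearing step by quoting \cite{Mnev:2007} ``in the form needed here,'' but that reference supplies the prismatic construction $\on{\ms Tot}$ of a bundle \emph{from} a given cellular local system; the statement that a \emph{triangulation} canonically yields such a local system on $\on{Sd}\pmb B$ is precisely Proposition \ref{tri-aggr} itself, so the citation is circular. Once the Cohen input is in place, the rest of your outline (assembly over flags of $\pmb B$, compatibility with restriction to faces and with the boundary automorphisms gluing elementary bundles, and the identification of $\on{Sd}\pmb p$ with a triangulation of the iterated cylinders, hence with $\on{\ms Tot}\mc S(\pmb p)$) matches the paper's argument; but without the passage to dual posets and the transverse-cellularity theorem the chain of aggregations you feed into the prismatic construction does not exist as described.
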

\begin{proof}
This specifically PL topology statement is based on M. Cohen's theory of transverse cellular maps and corresponding cylinders \cite{Cohen1967}.
 Consider an elementary triangulated $S^n$ bundle over a simplex $\pmb  R\xar{\pmb  q} \Delta^k$. The simplex  $\Delta^k$ has ordered vertices $v_0,....v_k$,  Take an interior point  $x \in \mathop{int} \Delta^k$. Take the simplicial $0$-face $\Delta^{k-1} \xar{\delta_0} \Delta^k$ and a point in the $0$-face $x_0 \in \mathop{int} \Delta^{k-1} $.  The fiber $q^{-1} (x)$ has the structure of abstract regular spherical PL cell complex  $P(x)$ induced from  the triangulation $\pmb  R$. It is a ``multi-simplicial complex''. Its balls are  simplicial prisms (see \cite{Dupont2011}). The prisms are products of simplices. It comes from the fact that the general fiber of a simplicial projection of a simplex onto a simplex is a  product of simplices which are the fibres of the projection over the  vertices in the base. When we move the point  $x$ in the base to the point $x_0$ in the $0$-face all the factors of the prisms in fiber coming from $p^{-1}(v_0)$  shrink to points.
\begin{figure}[h!]
	\begin{center}
		\includegraphics[width=3.0in]{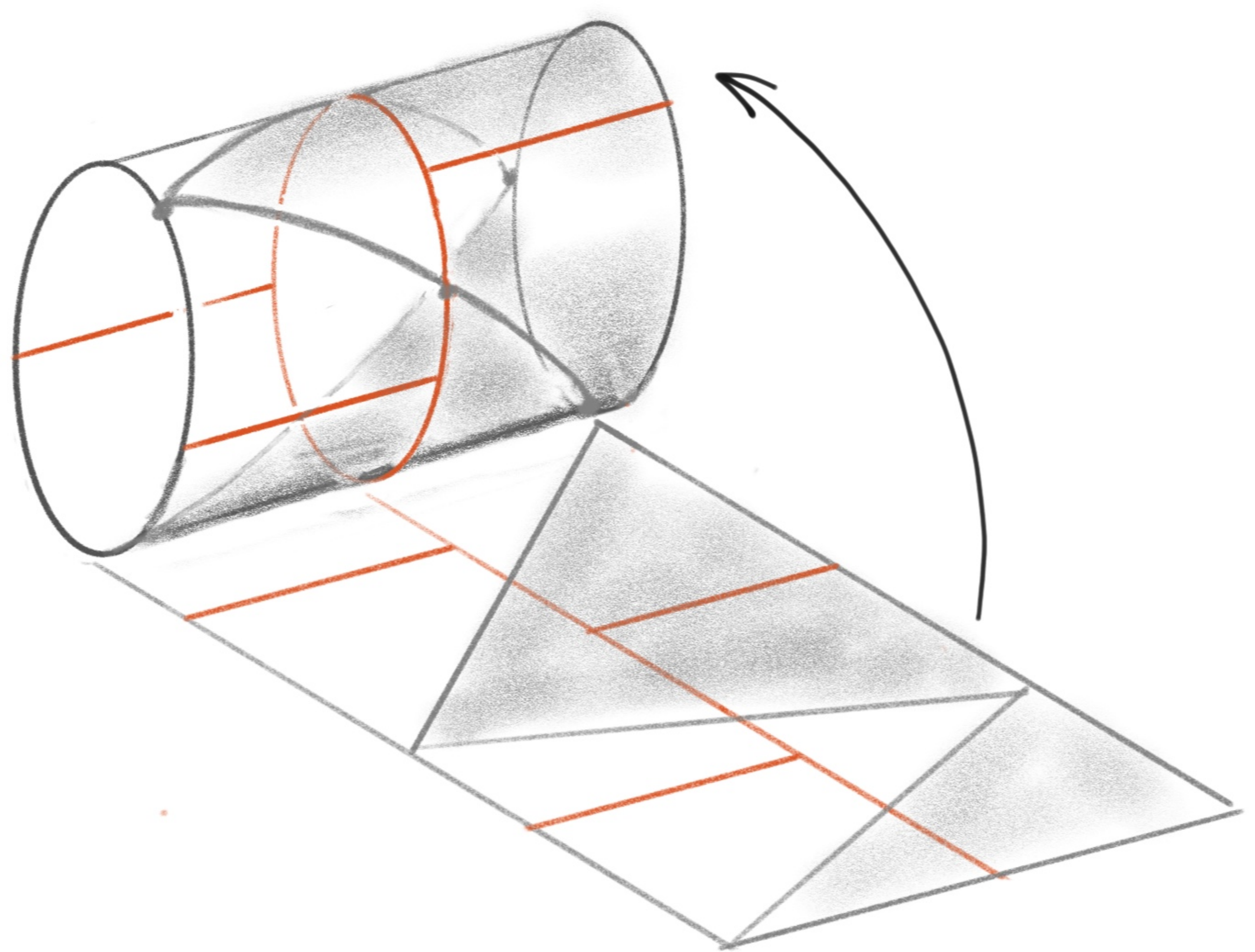} 
		\caption{ Elementary triangulated circle bundle over the interval and the dual pattern of circle subdivisions \label{dt}}
	\end{center}
\end{figure}
This creates multi-simplicial boundary degeneration maps which we consider as poset maps $P(x) \xar{\delta^*_0} P(x_0)$. So  we see over the 1-st derived subdivision of $\Delta^k$ iterated cylinders of those maps.  The key fact is that these boundary degeneration poset maps  are exactly Cohen's \textit{transverse cellular maps} (\cite[Theorem 8.1]{Cohen1967}) -- the poset maps of abstract PL sperical cell complexes which are \textit{dual to aggregation maps}. Thus $P^{\on{op}}(x) \xar{(\delta^*_0)^{\on{op}}} P^{\on{op}}(x_0)$ is an aggregation morphism.
Therefore, over the 1-st derived subdivision $\on{Sd} \Delta^k$ of the base simplex $\Delta^k$ we canonically obtain a diagram $\mc S(\pmb q) $ of aggregations of abstract PL spherical cell complexes (see Fig.  \ref{dt}). Now we may mention that applying Kan derived subdivision functor $\on{Sd} \pmb R \xar{\on{Sd} \pmb q} \on{Sd} \Delta^k$ we obtain a simplicial spherical bundle over $\on {Sd} \Delta^k$ triangulating both elementary bundle $\pmb q$ and the cellular bundle  $\on{\mathscr Tot} \mc S(\pmb q)$. The construction commutes with the assembly of $\pmb p$ from elementary bundles.
\end{proof}
\p{Local formula for the Euler class of a spherical bundle represented by an $\pmb S^n$-local system.} \label{aggreu} By Proposition \ref{tri-aggr} we can functorially  replace  a triangulated spherical bundle on $\pmb B$ by an $\pmb S^n$-local system on $\on{Sd} \pmb B$.  In the language of combinatorics of $\pmb S^n$-local system $\mc S$ on the base $\pmb B$, a rational local formula for Euler class is a rational number associated to the combinatorics of the stalk of the local system $\mc S$ over a $(n+1)$-simplex of the base and representing the simplicial Euler cocycle  for the bundle $|\on{\ms Tot} \mc S| \xar{} |\pmb  B|$.   This stalk is just a chain of $n+1$ aggregations of abstract  cellular spheres, which always can be realized as a chain of geometric aggregations (subdivisions).   The local system $\mc S$ is assembled from stalks over simplices using boundary combinatorial automorphisms of stalks. Therefore, a rational local formula  for the Euler  class in this setting is a rational function of chain of aggregations of spherical cell complexes depending only on the combinatorics of the chain and invariant under automorphisms of boundary subchains. This kind of formula we call an \textit{aggergation local combinatorial formula for the Euler class}. From an aggregation formula we can obtain a simplicial local formula \S  (\ref{slf}) by Proposition \ref{tri-aggr}, integrating the aggregation simplicial  cocycle over the derived subdivision of the base of the elementary simplicial bundle.

\section{Local system of abstract spherical cell complexes as a bigraded model of a spherical fiber bundle} \label{chains}

Assume that we have an abstract $\pmb S^n$-spherical cellular local system $\pmb B \xar{\mc S} \ms N \pmb S^n$ on simplicial locally ordered complex  $\pmb B$. Let us associate to $\mc S$ a local system of  $\on {Ch} S^n$-chain  complexes $$\pmb B \xar{\mc R(\mc S)} \ms N (\on{Ch} S^n)^\op $$ Pick freely the orientations of cells of any complex $S_v, v \in \pmb B(0)$ and form cellular chain complexes  $ R_v = C_\bullet ( S_v)$. Now let $ S_0 \xar{ S(0,1)}  S_1$  be  an aggregation morphism. By definition it is representable by orientation preserving  homeomorphism
$|S_0| \xar{f} |S_1|$ For any closed $k$-ball $B \in |S_1|$, $g^{-1} (B)$ is a union
of closed $k$-balls from $|S_0|$.
We associate to $S(0,1)$ the subdivision chain map\begin{equation}\label{subchain}
 	 R( S_1) \xar{ R( S(0,1))}  R( S_0)
 \end{equation}  sending a $k$-cell from $ R( S_1)$ to the sum of $k$-cells which it aggregates with relative orientations.  By acyclic carriers argument these maps  are quasi-isomorphisms  and they obviously commute with compositions.  The maps on zero-chains  commute with augmentations. The fact that we are in the oriented situation means that the fundamental classes $[S] \in Z_n (S)$ are fixed and the  chain maps $ R( S(0,1))$ sends fundamental  class to fundamental class.

The key but trivial statement of this work is:
\begin{prop}
The cellular chain complex $C_\bullet (\on{\ms  Tot} \mc S)$ is \textit{isomorphic} to  $\on{Tot} (\pmb B, \mc R (\mc S) )$.	
\end{prop}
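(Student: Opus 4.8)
The plan is to produce an explicit bijection between the cells of $\on{\ms Tot}\mc S$ and the standard generators $\sigma_p c_q$ of the total complex $\on{Tot}(\pmb B;\mc R(\mc S))$, and then to check that under this bijection the cellular boundary operator of $\on{\ms Tot}\mc S$ is carried to $\partial+\tilde\gamma$. The first task is to read off, from the construction of $\on{\ms Tot}\mc S$ in \S\ref{gas}, a normal form for its cells. Over a non-degenerate $p$-simplex $\sigma_p=(v_0<\dots<v_p)$ of $\pmb B$ the stalk of $\mc S$ is a chain of aggregations $S(v_0)\to\dots\to S(v_p)$, realized geometrically as a tower of coarsenings $S(v_0)\unlhd\dots\unlhd S(v_p)$ of one sphere $|S^n|$; the iterated cellular cylinder of the corresponding subdivisions is $|S^n|\times|\sigma_p|$ with the prismatic cell decomposition whose cell lying over a face $\tau=(v_{i_0}<\dots<v_{i_k})$ is the prism $\bar c\times|\tau|$, where $c$ runs over the cells of the complex $S(v_{i_k})$ over the \emph{last} vertex of $\tau$ — the coarsest of the complexes over $\tau$, so that $\bar c$ is a single closed cell over all of $\tau$ and is subdivided only over the faces of $\pmb B$ carrying a finer complex. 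One checks that this is a regular CW decomposition and that the cylinders over the various top simplices of $\pmb B$ are glued along common faces cell-by-cell; globally the cells of $\on{\ms Tot}\mc S$ are then exactly the pairs $(\sigma_p,c)$ with $\sigma_p\in\pmb B(p)$ and $c$ a cell of $S(v_p)$, the cell having dimension $p+\dim c$. This is precisely the indexing set of the generators $\sigma_p c_q$ of $C_p(\pmb B;\mc R_q(\mc S))$, so $(\sigma_p,c)\mapsto\sigma_p c$ is a graded bijection, and it visibly respects the filtration by base dimension $p$.

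For the differential I would fix orientations once and for all: orient every cell of every $S(v)$ (the free data already used to define $\mc R(\mc S)$), orient $\sigma_p$ by the vertex order, and orient the prism $(\sigma_p,c)$ by the product orientation with the simplex factor written first. Then the cellular boundary of $(\sigma_p,c)$ is given by the Leibniz rule $\partial(|\sigma_p|\times\bar c)=(\partial|\sigma_p|)\times\bar c+(-1)^p|\sigma_p|\times(\partial\bar c)$. In the first summand, for $i<p$ the face $d_i\sigma_p$ still has last vertex $v_p$ and $\bar c$ remains a single cell, contributing $(-1)^i(d_i\sigma_p,c)$; for $i=p$ the face $d_p\sigma_p$ has last vertex $v_{p-1}$, over which $\bar c$ breaks up into the cells of $S(v_{p-1})$ aggregating to $c$ with their relative orientations, i.e.\ into the chain $R(v_{p-1},v_p)(c)$ obtained by applying the subdivision chain map (\ref{subchain}), contributing $(-1)^p(d_p\sigma_p,R(v_{p-1},v_p)(c))$; together these are $\partial(\sigma_p c)$. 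The second summand is $(-1)^p(\sigma_p,\gamma_{\sigma_p}(c))=\tilde\gamma(\sigma_p c)$, the cellular boundary inside the fiber over $v_p$ being exactly the differential $\gamma$ of $C_\bullet(S(v_p))$. Hence the bijection intertwines the two differentials and is an isomorphism of filtered chain complexes; I would spell out the edge case $p=1$ in full to make the sign bookkeeping transparent, the only new feature for general $p$ being the single Koszul sign $(-1)^p$, which is precisely the one already displayed in $\tilde\gamma$ and in the last term of $\partial$.

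The genuinely non-formal step — and the expected main obstacle — is the first one: extracting from the parametric-Alexander-trick assembly of $\on{\ms Tot}\mc S$ in \cite{Mnev:2007} the clean assertion that its cells are the prisms $|\sigma_p|\times\bar c$ over the \emph{minimal} carrying simplex, indexed by the cell complex over the last vertex of that simplex, and that all transition gluings are cellular. Once this normal form is in hand, the boundary computation and the matching of signs are mechanical. As a by-product, since the bijection preserves the base-dimension filtration on the nose, it identifies the Leray filtration of $C_\bullet(\on{\ms Tot}\mc S)$ with the horizontal filtration of $\on{Tot}(\pmb B;\mc R(\mc S))$, so that the Serre spectral sequence of $|\on{\ms Tot}\mc S|\to|\pmb B|$ is identified with the bicomplex spectral sequence of Sec.~\ref{chains}; I would record this consequence explicitly, as it is what the rest of the paper relies on.
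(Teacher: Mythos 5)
Your proposal is correct and follows the same route as the paper: the paper's proof simply cites the prismatic (iterated cellular cylinder) construction of $\on{\ms Tot}\mc S$ and observes that its cellular differential splits into horizontal and vertical parts, which is exactly the cell-by-cell identification and Leibniz-rule computation you carry out in detail. Your explicit normal form for the prism cells over the last vertex of the carrying simplex, and the resulting identification of filtrations, is precisely the content the paper treats as immediate from \S5 and \cite{Mnev:2007}.
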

\begin{proof}
This is an immediate corollary of the construction of ``prismatic bundle" $\on{\ms  Tot} \mc S$  (\S (\ref{gas}), \cite{Mnev:2007}). The cellular differential of the prismatic bundle is naturally decomposed into the sum of vertical and horizontal  differentials.   	
\end{proof}	
But the corollary is that we get an explicit ``bigraded model of fibration."
\begin{cor}
The algebraic bicomplex spectral sequence of the local system $\mc R (\mc S)$ on $\pmb B$ is the Leray-Serre spectral sequence of the PL spherical fiber bundle $|\on{\ms Tot} \mc S |\xar{} |\pmb B|$.
\end{cor}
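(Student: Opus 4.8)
The plan is to read the statement off the preceding Proposition. That Proposition gives an isomorphism of chain complexes $C_\bullet(\on{\ms Tot}\mc S)\cong\on{Tot}(\pmb B,\mc R(\mc S))$, in fact an isomorphism of bicomplexes, and the first thing I would do is observe that it is therefore an isomorphism of \emph{filtered} complexes for the horizontal filtration. On the right this filtration is $F_p(\on{Tot}(\pmb B,\mc R(\mc S)))_d=\bigoplus_{k_1\le p,\ k_1+k_2=d}C_{k_1}(\pmb B;\mc R_{k_2})$; on the left it matches the skeletal filtration $q^{-1}(\on{sk}_\bullet|\pmb B|)$ of the total space, where $q\colon\on{\ms Tot}\mc S\to\pmb B$ is the prismatic bundle projection. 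These agree because, by the very construction of $\on{\ms Tot}\mc S$, a generator $\sigma_p c_q$ of $C_p(\pmb B;\mc R_q)$ corresponds to a prismatic $(p+q)$-cell carried by $q^{-1}(\sigma_p)\subseteq q^{-1}(\on{sk}_p|\pmb B|)$, so $F_p$ is precisely the subcomplex of cellular chains of $q^{-1}(\on{sk}_p|\pmb B|)$.

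Next I would invoke the cellular model of the Leray--Serre spectral sequence: for a fiber bundle whose base is a CW (here simplicial) complex and whose total space carries a compatible CW structure making the projection cellular --- which is exactly the situation of $\on{\ms Tot}\mc S$ --- the skeletal filtration $q^{-1}(\on{sk}_\bullet|\pmb B|)$ computes the Serre spectral sequence. This is the ``bigraded model'' viewpoint of \cite{Dress1967,Fadell1958} already used above; its $E^1$-page is $C_p(\pmb B;\mc H_q)$ with $\mc H_q$ the local coefficient system $v\mapsto H_q(q^{-1}(v))$ and $d_1$ the simplicial differential twisted by monodromy, so $E^2_{p,q}=H_p(\pmb B;\mc H_q)$, and it converges to $H_{p+q}(\on{\ms Tot}\mc S)$.

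It then remains to match this with the algebraic bicomplex spectral sequence of $\mc R(\mc S)$, for which it was already computed above that $E^1_{p,q}\approx C_p(\pmb B;H_q(S^n))$ with $d_1=\partial$ and $E^2_{p,q}=H_p(\pmb B;H_q(S^n))$. The only point needing care is the coefficient systems. On the algebraic side the $q$-th homology local system has transition maps $H_q(\mc R(\mc S(v_0,v_1)))$, which are the identity of $A$ for $q=0,n$ since every aggregation chain map preserves augmentation and fundamental class. On the topological side the monodromy of $\mc H_q$ is induced by the fibrewise gluing homeomorphisms (the parametric Alexander trick), which are orientation preserving, so it too is the identity of $H_q(S^n)=A$. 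Hence $\mc H_q$ coincides with $v\mapsto H_q(\mc R(\mc S)_v)$, the $d_1$-differentials agree, and the filtered isomorphism of the first paragraph identifies the two spectral sequences --- from $E^1$ on, and in fact from $E^0$.

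The main obstacle is the assertion of the second paragraph that the skeletally-filtered \emph{cellular} chain complex of the prismatic total space reproduces the Serre spectral sequence, and not merely some spectral sequence with the expected $E^2$. The cleanest remedy is to compare with singular chains: the natural map from the cellular chain complex of $\on{\ms Tot}\mc S$, filtered by $q^{-1}(\on{sk}_\bullet|\pmb B|)$, to the singular chain complex with its Serre filtration by $q^{-1}(\on{sk}_\bullet|\pmb B|)$ is a quasi-isomorphism on each filtration stage (cellular versus singular homology of $q^{-1}(\on{sk}_p|\pmb B|)$), hence a filtered quasi-isomorphism, so the comparison theorem for spectral sequences of filtered complexes gives the identification from $E^1$ onward; alternatively one cites the uniqueness of the Serre spectral sequence over a CW base as in \cite{Dress1967}. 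Everything else is the bookkeeping already set up in Section~\ref{chains}.
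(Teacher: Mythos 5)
Your proposal is correct and follows essentially the route the paper intends: the Corollary is treated there as immediate from the Proposition identifying $C_\bullet(\on{\ms Tot}\mc S)$ with $\on{Tot}(\pmb B,\mc R(\mc S))$ as a bicomplex, together with the classical ``bigraded model'' comparison (Dress, Fadell, Berikashvili) that the skeletal filtration over a simplicial base yields the Serre spectral sequence. You have simply made explicit the filtration match, the coefficient-system check, and the filtered cellular-versus-singular comparison that the paper leaves implicit, which is fine.
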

What follows by Proposition \ref{Igu} is:
\begin{cor} \label{bigrad}
If we endow the local  system $\mc R(\mc S)$ on $\pmb B$ by a system $\mc F$  of SDR on $H(S^n)$ then the expression (\ref{formal}) is an expression for the  simplicial  Euler cocycle on the base of $\pmb B$ of the fiber bundle $|\on{\ms Tot} \mc S|\xar{} |\pmb B|$ represented by the local system $\mc S$ of aggregations of abstract spherical cell complexes.   	
\end{cor}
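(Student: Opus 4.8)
The plan is to read Corollary \ref{bigrad} as the concatenation of three facts already in hand, so that no new computation is needed. First I would invoke the Proposition just proved, which gives an honest isomorphism of bicomplexes $C_\bullet(\on{\ms Tot}\mc S)\cong\on{Tot}(\pmb B,\mc R(\mc S))$ that matches the two filtrations (horizontal $=$ pullback of the base filtration, vertical $=$ fibre filtration). Feeding the local system $\mc R(\mc S)$ together with the chosen SDR data $\mc F$ into Proposition \ref{Igu} produces a filtration-preserving strong deformation retraction of $\on{Tot}(\pmb B,\mc R(\mc S))$ onto $C_\bullet(\pmb B)\otimes_{\mathscr E(\mc F)}H(S^n)$, with $\mathscr E(\mc F)$ the explicit cochain (\ref{formal}). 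Since the retraction preserves filtrations, the associated spectral sequences agree from the first page on; on the twisted side, because $\tau^1=0$ in the $H(S^n)$ case, the only nonzero higher differential is $\frown\mathscr E(\mc F)$ on page $n+1$, realising the transgression (\ref{trans}).

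Next I would match this transgression with the Euler class. By the preceding Corollary, the bicomplex spectral sequence of $\mc R(\mc S)$ is the Leray--Serre spectral sequence of the PL bundle $p\colon|\on{\ms Tot}\mc S|\to|\pmb B|$; by the discussion of \S2 the $d_{n+1}$-differential of that spectral sequence is the Gysin homomorphism, and on $E^{n+1}_{n+1,0}\to E^{n+1}_{0,n}$ it is cap product with a representative of the integral Euler class $\mathscr E(p)$. Hence the two $(n+1)$-cochains $\mathscr E(\mc F)$ and any simplicial representative of $\mathscr E(p)$ induce the same map on $E^{n+1}$, so they are cohomologous in $C^{n+1}(\pmb B;A)$, i.e.\ $\mathscr E(\mc F)$ represents $\mathscr E(p)$ over $A$. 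As $\mathscr E(\mc F)$ is by (\ref{formal}) a bona fide simplicial cochain on $\pmb B$ assembled vertex-locally from the retractions $\la F_v,i_v,p_v\ra$, this is exactly the assertion.

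The one genuine point to watch --- the step I expect to be the main obstacle --- is compatibility of orientations: formula (\ref{formal}) only sees the fundamental classes $i_n(1)\in H_n(S^n)$ and the augmentations $p_0$, so one must check that the fixed orientations of the abstract cell complexes $S_v$ (which make the aggregation maps degree one) translate, under $R_v=C_\bullet(S_v)$ and the chosen $i_v,p_v$, into orientation data for which the formal transgression of \S4 literally computes the topological transgression rather than a nonzero multiple of it. Here one uses that aggregation chain maps carry fundamental class to fundamental class and that augmentations match, so the chosen generator of $E^{n+1}_{n+1,0}\cong H_{n+1}(\pmb B)\otimes H_0(S^n)$ is sent to the intended generator of $E^{n+1}_{0,n}$; once that normalisation is pinned down the equality of cocycle classes is forced, and integrality of the formula needs no independent check --- it is inherited from $\mathscr E(p)$. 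A secondary point is naturality: the passage $\mc S\rightsquigarrow\mc R(\mc S)$, the perturbation-lemma construction, and the prismatic total space are all functorial in $\pmb B$, so the resulting cocycle is well defined independently of the auxiliary choices of cell orientations up to the coboundaries already accounted for.
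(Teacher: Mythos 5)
Your proposal is correct and follows essentially the paper's own route: the paper derives Corollary \ref{bigrad} immediately by combining the isomorphism $C_\bullet(\on{\ms Tot}\mc S)\cong\on{Tot}(\pmb B,\mc R(\mc S))$, the identification of its bicomplex spectral sequence with the Leray--Serre sequence, and Proposition \ref{Igu}, exactly as you do. Your added remarks on orientation normalisation are consistent with the paper's conventions (fixed fundamental classes preserved by aggregation chain maps) and do not change the argument.
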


\section{Combinatorial Hodge-theoretic twisting cochain and a local combinatorial formula for the Euler class of a PL spherical fiber bundle}

Now we may choose SDR of $ R ( S_v)$ on homology and see what happens. There is freedom in interesting choices  but the simplest  is combinatorial  Hodge-theoretic (or Moore-Penrose) matrix retractions.
\p{Rational matrix homology splitting of cellular sphere.} \label{split}
Now our coefficients are rational numbers $\Q$.
Let $S$ be the $n$-dimensional PL cellular sphere. Cells
of $S$ have fixed orientations and are linear ordered in every
dimension. Let  $S$ have $w$  ordered vertices and $f$ ordered top cells.

We can form a based integer cellular chain complex
$C_\bullet(S)$ with differentials $\gamma_i$ represented by  matrices with entries $0, 1, -1$.
Set $ R_\bullet = C_\bullet(S)\otimes \Q$. We have special trivial complex 
$$H(S^n;\Q) =(0\xar{}\Q\xar{}0\xar{}...\xar{}0\xar{}\Q \xar{}0)  $$ It has  $\Q$ as $0$ and $n$ terms, zero at all the other places and zero differential. We denote by $\Q_0,\Q_n$ the two nontrivial modules of $H(S^n;\Q)$. We suppose that $H(S^n;\Q)$ has fixed bases as modules over $\Q$ identified as units in $\Q_0,\Q_n$. The complex $R_\bullet$ has a fixed augmentation
\begin{equation}\label{p_0}
	R_0 \xar{p_0} \Q: p_0 (\beta_1,...,\beta_w) = \sum_{i=1}^v \beta_i, p_0 \gamma_1 =0
\end{equation}
 Fix the cellular fundamental class
$	[S]\in R_n $.
Fixing the fundamental class allows us to define
\begin{equation}\label{i_n}
		\Q \xar{i_n} R_n : i_n (\alpha)=\alpha [A], \gamma_n i_n=0
\end{equation} 
Thus we have identified $R_\bullet$ as an object of $\on{Ch}(S^n; \Q)$.

\bigskip
\noindent
Our aim is to find matrix representation for SDR data of $R_\bullet$ onto $H(S^n;\Q)$.

\bigskip
\noindent
Unwinding conditions  (\ref{sdr}),(\ref{retr}),(\ref{nil}) for the special case of a retraction $R_\bullet$ onto its homology $H(S^n;\Q)$
we got a diagram
\begin{equation}\label{key}
	\xymatrix{
		0  \ar@<.5ex>@{..>}[d]  & 0 \ar@<.5ex>[d] \\
		R_n	 \ar@<.5ex>[r]^{p_n}  \ar@<.5ex>[d]^{\gamma_n}  \ar@<.5ex>@{..>}[u]    &  \Q_n \ar@<.5ex>[u] \ar@<.5ex>[l]^{i_n} \\
		R_{n-1}  \ar@<.5ex>[u]^{F_n} \ar@<.5ex>[d]^{\gamma_{n-1}} &  0 \\
		\vdots   \ar@<.5ex>[u]^{F_{n-1}}  \ar@<.5ex>[d]^{\gamma_{2}} & \vdots \\
		R_1  \ar@<.5ex>[u]^{F_{2}} \ar@<.5ex>[d]^{\gamma_1} & 0 \\
		R_0 \ar@<.5ex>[u]^{F_{1}} \ar@<.5ex>[r]^{p_0} \ar@<.5ex>@{..>}[d]   &  \Q_0 \ar@<.5ex>[l]^{i_0} \ar@<.5ex>[d] \\
		0 \ar@<.5ex>@{..>}[u]     & 0 \ar@<.5ex>[u]
	}
\end{equation}
We can translate axioms (\ref{retr}),(\ref{nil}) in a symmetric way: the two  complexes
\begin{equation}
	\begin{array}{c} \label{ttt} 0 \xar{} \Q_n \xar{i_n}R_n \xar{\gamma_n}R_{n-1} \xar{\gamma_{n-1}}...\xar{\gamma_1}R_0 \xar{p_0} \Q_0 \xar{}0 \\
		0 \xleftarrow{} \Q_n \xleftarrow{p_n}R_n \xleftarrow{F_n}R_{n-1} \xleftarrow{F_{n-1}}...\xleftarrow{F_1}R_0 \xleftarrow{i_0} \Q_0 \xleftarrow{}0
	\end{array}
\end{equation}
are acyclic chain and cochain complexes correspondingly   on the same graded free module over $\Q$, the operators of the second are SDR nullhomotopy operators (i.e. SDR onto zero) for the first. This means that we have identities:
\begin{equation}\label{sdrs}
	\begin{array}{rcll}
		\gamma_1 F_1 + i_0 p_0 & = & Id & \\
		i_np_n  +  F_n\gamma_n & = &Id & \\
		\gamma_j F_j + F_{j-1} \gamma_{j-1}& = &Id & \text{ for } j\neq 1,n\\
	\end{array}
\end{equation}
$$\gamma_{j}\gamma_{j-1}=0, \gamma_n i_n =0,  p_0 \gamma_1 =0, p_n F_n=0, F_1 i_0 =0, F_{j-1}F_{j}=0 $$
For the matrix SDR of the chain sphere $R_\bullet$ on homology we have fixed all the data of the first raw in  (\ref{ttt}).   To get a retraction of $R_\bullet$ onto $H(S^n;\Q)$ we need to find the data in the second row of (\ref{ttt}) satisfying  conditions (\ref{sdrs}) together with the data in the first raw.

\bigskip
\noindent
For a matrix $M$ over $\Q$ denote by $M^\dagger$ its Moore-Penrose inverse matrix.
\begin{lemma}
A 	matrix homology splitting of $R_\bullet$ is provided by data $\la F ,i,p \ra$ where $F_i = \gamma_i^\dagger$, $i_0 = p_0^\dagger$, $p_n = i_n^\dagger$
\end{lemma}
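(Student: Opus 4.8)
The plan is to read the Moore--Penrose inverses through finite-dimensional (combinatorial) Hodge theory. First I would equip every $R_i$ with the inner product making the chosen ordered cellular basis orthonormal, and $\Q_0,\Q_n$ with the inner product in which the fixed units are orthonormal. For a $\Q$-matrix $M\colon V\to W$ one then has the standard facts: $MM^\dagger$ is the orthogonal projection of $W$ onto $\on{im}M$, $M^\dagger M$ is the orthogonal projection of $V$ onto $(\ker M)^\perp=\on{im}M^{T}$, $M^\dagger=M^{T}(MM^{T})^\dagger=(M^{T}M)^\dagger M^{T}$, and $\ker(MM^{T})=\ker M^{T}$. All of these are rational operations (restrict a rational Gram matrix to its rational image, invert there, extend by zero), so $\la F,i,p\ra$ as defined is genuinely matrix data over $\Q$.

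Next I would assemble the first row of (\ref{ttt}) into a single complex
$$\mc A\colon\quad 0\xar{}\Q_n\xar{i_n}R_n\xar{\gamma_n}R_{n-1}\xar{\gamma_{n-1}}\cdots\xar{\gamma_1}R_0\xar{p_0}\Q_0\xar{}0,$$
with $\Q_n$ placed in degree $n+1$ and $\Q_0$ in degree $-1$, and observe that $\mc A$ is \emph{exact}: $R_\bullet=C_\bullet(S;\Q)$ has homology $\Q$ in degrees $0$ and $n$ and zero in between; $S^n$ is connected, so $\ker p_0$ is the module of $0$-boundaries $\on{im}\gamma_1$; the module $\on{im}i_n=\Q\cdot[S]$ equals the module $Z_n=\ker\gamma_n$ of top cycles; and $i_n$ is injective while $p_0$ is surjective. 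Writing $d$ for the differential of $\mc A$, the triple $\la F,i,p\ra$ of the statement is exactly the degreewise Moore--Penrose inverse $d^\dagger$: $i_0=p_0^\dagger$ in the bottom degree, $F_j=\gamma_j^\dagger$ ($1\le j\le n$) in the middle, $p_n=i_n^\dagger$ in the top degree, while the remaining components $p_1,\dots,p_{n-1}$ and $i_1,\dots,i_{n-1}$ of the homology retraction are zero.

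The core step is the algebraic Hodge lemma: for a finite-dimensional exact complex $(\mc A_\bullet,d)$ of inner-product spaces the degreewise pseudoinverse satisfies $dd^\dagger+d^\dagger d=\on{Id}$, $(d^\dagger)^2=0$ and $d^\dagger d\,d^\dagger=d^\dagger$. In degree $k$ exactness gives $\on{im}d'=\ker d$ for the incoming differential $d'$, so the complementary orthogonal projections $d'(d')^\dagger$ (onto $\on{im}d'$) and $d^\dagger d$ (onto $(\ker d)^\perp$) sum to the identity, which is the first relation; $(d^\dagger)^2=0$ holds because, with $d^\dagger=d^{T}(dd^{T})^\dagger$ and $\ker(dd^{T})=\ker d^{T}=(\on{im}d)^\perp=(\ker d')^\perp$, the operator $d^\dagger$ annihilates $(\ker d')^\perp\supseteq\on{im}(d')^\dagger$; and $d^\dagger d\,d^\dagger=d^\dagger$ is a Penrose axiom. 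Decoding these three identities along $\mc A$ reproduces the symmetric system (\ref{sdrs}) together with the annihilation conditions: the interior instances of $dd^\dagger+d^\dagger d=\on{Id}$ give $\gamma_jF_j+F_{j-1}\gamma_{j-1}=\on{Id}$ on $R_{j-1}$ for $2\le j\le n$; the two boundary instances give $\gamma_1F_1+i_0p_0=\on{Id}$ on $R_0$ and $i_np_n+F_n\gamma_n=\on{Id}$ on $R_n$; the extreme degrees $n+1$ and $-1$ give $p_ni_n=\on{Id}_{\Q_n}$ and $p_0i_0=\on{Id}_{\Q_0}$ (automatic from $i_n$ injective, $p_0$ surjective); and $(d^\dagger)^2=0$ specializes to $p_nF_n=0$, $F_jF_{j-1}=0$ and $F_1i_0=0$, while $\gamma_j\gamma_{j-1}=0$, $\gamma_ni_n=0$, $p_0\gamma_1=0$ (which also say that $i$ and $p$ are chain maps to $H(S^n;\Q)$) are built into the object $R_\bullet\in\on{Ch}(S^n)$. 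Hence $\la F,i,p\ra$ is an SDR of $R_\bullet$ onto $H(S^n;\Q)$, i.e. a matrix homology splitting.

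I expect the only real subtlety to be the bookkeeping at the two ends of $\mc A$: checking that $\on{im}i_n$ is precisely the module of $n$-cycles, so that $\mc A$ is exact at $R_n$, and matching the pseudoinverse normalizations $i_n^\dagger i_n=\on{Id}$ and $p_0p_0^\dagger=\on{Id}$ with the requirements $p_ni_n=\on{Id}$, $p_0i_0=\on{Id}$ demanded of a splitting onto the fixed orientation and augmentation lines. Everything in the interior is a one-line consequence of the Hodge decomposition $\mc A_k=\on{im}d_{k+1}\oplus\on{im}d_k^{T}$ and the defining properties of the Moore--Penrose inverse, so no genuine computation is needed.
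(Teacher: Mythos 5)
Your proposal is correct and takes essentially the same route as the paper: the paper's proof is a citation of combinatorial Hodge theory (Moore--Penrose retraction of a based rational complex onto its homology), and your argument --- assembling the first row of (\ref{ttt}) into an acyclic augmented complex and using that $d d^\dagger + d^\dagger d = \on{Id}$ and $(d^\dagger)^2 = 0$ hold degreewise because $d^\dagger d$, $dd^\dagger$ are the orthogonal projections onto $(\ker d)^\perp$, $\on{im}\, d$ --- is exactly the content of that citation, spelled out in the symmetric form the paper itself sets up before the lemma. The boundary bookkeeping ($\on{im}\, i_n = \ker \gamma_n$, $\ker p_0 = \on{im}\, \gamma_1$, $p_n i_n = \on{Id}$, $p_0 i_0 = \on{Id}$) is handled correctly, so there is no gap.
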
	\begin{proof} The Lemma follows from  combinatorial Hodge theory for free based rational chain complex with fixed bases, which provides  a strong deformational retraction $$\la \gamma^\dagger ,i,p \ra $$ of $R_\bullet$ onto homology $H(S^n;\Q)$ (see for example \cite[\S A.1]{Nicolaescu2003}). Here the matix $\gamma^\dag_i$ is the Moore-Penrose inverse of matrix $\gamma_i$, $i_0 = p_0^\dagger$, $p_n = i_n^\dagger$. \end{proof}

We will present matrix  formulas. Our $R_\bullet=C_\bullet (S;\Q)$ is based. We  have canonical scalar products making the bases in $R_\bullet$ orthonormal and therefore we have combinatorial Hodge theory.
Let $$R_{i-1}\xar{\gamma_i^\intercal} R_i $$ be the metric adjoint differential for $\gamma_i$, which is represented  simply by transposed matrix.
Let
\begin{equation}\label{lap}
	\Delta_i = \gamma^\intercal_i \gamma_i + \gamma_{i+1} \gamma_{i+1}^\intercal
\end{equation}
be the combinatorial  Laplace matrix operator.
Our  cellular sphere has $w$ vertices and $f$ top cells.
For the top cell $j$ we denote by $o(j,[A])$ its orientation relatively to the fundamental class. Then the matrix formulas for combinatorial Hodge theory (or Moore-Penrose) homology splitting of $R_\bullet$ are the following. 
Define  matrices $$i_0,i_n, p_0, p_n $$
by the  formulas:
\begin{equation}\label{ip}
	\begin{array}{lrll}
		\Q_0 \xar{i_0} R_0: & i_0(\beta)  &=& \frac{1}{w}\underbrace{(\beta,...,\beta)}_w \\
		\Q_n \xar{i_n} R_n: & i_n (\alpha) &=& \alpha [A] \\
		R_0 \xar{p_0} \Q_0: & p_0(\beta_1,...,\beta_w) &=& \sum_{j=1}^w \beta_j \\
		R_n \xar{p_n} \Q_n: & p_n(\alpha_1,...,\alpha_f) &=& \frac{1}{f}\sum_{j=1}^n o(j,[A])\alpha_j
	\end{array}
\end{equation}
Set
\begin{equation}\label{green}
	G_j = \begin{cases} (\Delta_j + i_j p_j)^{-1} & j=0,1 \\
		\Delta_j^{-1}  & j \neq 0,n \end{cases}
\end{equation}
and set
\begin{equation}\label{F}
	\gamma^\dagger_j=\gamma_j^\intercal G_{j-1} = G_j\gamma_{j-1}^\intercal
\end{equation}

\p{Hodge-theoretic local combinatorial formula for the Euler class of a PL spherical fiber bundle.}

Now we can insert Hodge-theoretic matrix homology splittings of cellular spheres into formula (\ref{formal}) for the formal Euler cocycle and get a theorem.

Suppose we have a chain of spherical aggregations in $\pmb S^n$: $$\mc S =(S_0 \xar{} S_1 \xar{}... \xar{} S_{n+1})$$
Suppose we have a corresponding chain of based chain complexes in $ \on{Ch} (S^n, \Q)$ (see Sec. \ref{chains}):
$$ \mc R (\mc S) = (R_\bullet^0 \xlar{R(0,1)}R_\bullet^1 \xlar{R(1,2)})... \xlar{R(n,n+1)}R_\bullet^{n+1}) $$
where $R(i,i+1)$ are  subdivision chain quasi-isomorphisms (\ref{subchain}).  The differential in $R^i_\bullet$ is denoted by $\gamma$.  For every $R^i_\bullet$  we have Hodge theoretic   
strong deformational retraction onto $H(S^n; \Q)$:
\begin{equation}\label{}
	\begin{tikzcd}
		R^i_\bullet \arrow[r, "p", shift left] \arrow["\gamma^\dagger"', loop, distance=2em, in=215, out=145] & H(S^n;\Q)  \arrow[l, "i", shift left]
	\end{tikzcd}
\end{equation} 
Matrices of all the operators are defined by formulas (\ref{ip}), (\ref{green}), (\ref{F}).   
\begin{theorem} \label{finformula}
For a chain of spherical aggregations $\mc S$,
the rational number obtained as the matrix product 
\begin{equation}\label{last}
	e_{\it CH}(S)  = p_n R(0,1)\gamma^\dagger R(1,2)....\gamma^\dagger R(n,n+1)i_{0}  
\end{equation}
is an aggregation  local combinatorial formula  for the Euler class of PL $S^n$-fiber bundles (as in \S (\ref{aggreu})). 
\begin{proof}
This is the matrix formula from Proposition \ref{Igu} for the twisting cochain in the bigraded model of PL spherical fiber bundle defined by a local system of aggregations (Corollary \ref{bigrad}). It is invariant under  all choices involved, invariant under automorphisms of $R^i_\bullet$, because all the involved Laplace and Green  operators are. It depends up to sign only on the bundle orientation.
\end{proof}

\end{theorem}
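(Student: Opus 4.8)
The plan is to obtain Theorem~\ref{finformula} as the concrete matrix incarnation of the formal Euler cocycle of Proposition~\ref{Igu}, applied to the local system of Section~\ref{chains} equipped with the combinatorial Hodge (Moore--Penrose) retractions of \S\ref{split}. First I would note that the matrix product (\ref{last}) is nothing but the cochain (\ref{formal}) after the substitutions $L_{i,i+1}\rightsquigarrow R(i,i+1)$ and $F_j\rightsquigarrow\gamma^\dagger$, with the outer maps $p_n,i_0$ taken to be the Hodge boundary operators of (\ref{ip}): indeed the chain of based complexes $\mc R(\mc S)$ attached to $\mc S$ is exactly the stalk over an $(n{+}1)$-simplex of the $(\on{Ch} S^n)^\op$-valued local system $\mc R(\mc S)$ built in Section~\ref{chains}, and the combinatorial Hodge theory recalled in \S\ref{split} certifies that $\la\gamma^\dagger,i,p\ra$ is a bona fide filtered SDR of each $R^i_\bullet$ onto $H(S^n;\Q)$, hence legitimate data $\mc F$ for Proposition~\ref{Igu}. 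Proposition~\ref{Igu} then yields that (\ref{formal}) is a cocycle in $Z^{n+1}(\pmb B;\on{Hom}^n(H(S^n),H(S^n)))$ and a twisting cochain for the Hirsch model of $\mc R(\mc S)$; Corollary~\ref{bigrad} identifies that model with the bigraded model of the PL bundle $|\on{\ms Tot}\mc S|\xar{}|\pmb B|$, so the resulting cocycle represents the simplicial Euler cocycle of the bundle.

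Second, I would record that the output is genuinely a rational number. Since $H(S^n;\Q)$ is concentrated in degrees $0$ and $n$, each of rank one with a pinned unit, the module $\on{Hom}^n(H(S^n;\Q),H(S^n;\Q))$ is canonically identified with $\Q$ (read off the coefficient of the degree-$n$ unit in the image of the degree-$0$ unit), so the composite $p_nR(0,1)\gamma^\dagger\cdots\gamma^\dagger R(n,n+1)i_0$ collapses to a single scalar $e_{\it CH}(\mc S)\in\Q$, as demanded in \S\ref{aggreu}.

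Third --- and this is the step requiring a little care --- I would verify that $e_{\it CH}(\mc S)$ is a \emph{local combinatorial} formula in the precise sense of \S\ref{aggreu}: it must depend only on the combinatorial isomorphism type of the chain of aggregations $S_0\to\cdots\to S_{n+1}$, and be insensitive to the boundary automorphisms used to assemble a global $\pmb S^n$-local system from such stalks. The argument is canonicity of every ingredient. The scalar products in (\ref{lap}) are those making the cells orthonormal, and reversing a cell's orientation merely flips a basis vector, so the scalar products --- hence the Laplacians $\Delta_j$, the Green operators $G_j$ of (\ref{green}), the pseudoinverses $\gamma_j^\dagger$ of (\ref{F}), and the maps (\ref{ip}) --- are determined by the underlying abstract cell complexes together with the choice of fundamental classes. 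The only residual freedom, the orientations of individual cells, enters $\gamma_j^\dagger$ and the two neighbouring subdivision matrices $R(j{-}1,j)$ and $R(j,j{+}1)$ through conjugation by a diagonal $\pm1$ matrix, and these conjugations telescope out of the product (\ref{last}), which begins and ends on the rank-one homology modules whose generators depend only on the fundamental classes $[S_i]$; thus $e_{\it CH}$ depends only on the chain of aggregations together with its orientation, and changes sign under reversal of the bundle orientation. Invariance under automorphisms of boundary subchains is then subsumed by this naturality together with the cocycle identity $\delta\,\mathscr E(\mc F)=0$ of Proposition~\ref{Igu}: re-gluing stalks by a commuting family of cell-complex automorphisms of a proper subchain changes neither the isomorphism type of any $(n{+}1)$-stalk nor the Hodge SDR it carries, so the assigned cochain, and with it its cohomology class, is unchanged.

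I expect the only real obstacle to be precisely this bookkeeping of the orientation and ordering dependence. All the substantive homological algebra sits in Proposition~\ref{Igu} (Igusa's twisting-cochain formula, proved there via the Basic Perturbation Lemma~\ref{bpl}), and all the substantive PL topology sits in Proposition~\ref{tri-aggr} (Cohen's transverse cellular maps); both may be invoked as given, and what remains for Theorem~\ref{finformula} is to check that feeding the canonical combinatorial-Hodge data into (\ref{formal}) produces exactly (\ref{last}) and that no arbitrary choice survives in the resulting scalar.
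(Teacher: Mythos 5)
Your proposal follows essentially the same route as the paper's proof: it specializes the twisting-cochain formula (\ref{formal}) of Proposition \ref{Igu} to the local system $\mc R(\mc S)$ with the combinatorial Hodge retractions, invokes Corollary \ref{bigrad} to identify this with the Euler cocycle of the bundle $|\on{\ms Tot}\mc S|\xar{}|\pmb B|$, and concludes by the canonicity of the Laplace and Green operators that no arbitrary choice (cell orientations, orderings, boundary automorphisms) survives in the scalar. Your version merely spells out the orientation bookkeeping and the identification $\on{Hom}^n(H(S^n;\Q),H(S^n;\Q))\cong\Q$ more explicitly than the paper does, which is a sound elaboration rather than a different argument.
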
 

\section{Notes}
We don't know much about the behaviour of the local formula (\ref{last}). To our deep shame we don't know now why the cocycle is a coboundary if $n$ is even. It should be, since the Euler class of even dimensional spherical bundle iz zero. We suspect that the module of number (\ref{last}) has small absolute upper bound, but we don't know it. Meanwhile, we can compare in an interesting way the formula with the formulas for circle bundles 
\cite{Igusa2004, MS}. We postpone it for a later writing.   
Formula (\ref{last}) should have interesting interpretation in terms of cellular combinatorial physics and statistics. It is composed from Moore-Penrose inverses of differentials which have very interesting description \cite[Theorem 5.3]{CCK2015} based on Lothar Berg's theorem \cite{Berg1986} which have no analog yet in differential Hodge theory.

\bibliographystyle{alpha}
\bibliography{euler}
\end{document}